\documentclass[12pt]{amsart}
\usepackage{amssymb,xcolor,graphics}
\usepackage[psdextra]{hyperref}
\usepackage{tabularx}
\usepackage{booktabs}
\usepackage{caption}
\usepackage{mathdots}

\topmargin -0.5cm
\oddsidemargin -0.5cm
\evensidemargin -0.5cm
\topskip     0pt
\headheight  0pt
\footskip   18pt
\textheight 22.5cm
\textwidth 17cm

\newtheorem{thm}{Theorem}[section]
\newtheorem{lem}[thm]{Lemma}
\newtheorem{cor}[thm]{Corollary}
\newtheorem{prop}[thm]{Proposition}
\newtheorem{ex}[thm]{Example}

\newtheorem*{prob*}{Open problem}

\theoremstyle{definition}

\newtheorem{defi}[thm]{Definition}

\theoremstyle{remark}

\newtheorem*{rem*}{Remark}


\DeclareMathOperator{\id}{id}

\DeclareMathOperator{\rad}{rad}

\DeclareMathOperator{\Aut}{Aut}

\newcommand{\kringel}{\mathbin{\raise1pt\hbox{$\scriptstyle\circ$}}}
\newcommand{\pkt}{\mathbin{\raise0pt\hbox{$\scriptstyle\bullet$}}}

\newcommand{\C}{\mathbb{C}}

\newcommand{\ad}{{\rm ad}}

\newcommand{\End}{{\rm End}}
\newcommand{\Der}{{\rm Der}}

\newcommand{\nil}{\mathop{\rm nil}}

\newcommand{\La}{\mathfrak{a}}
\newcommand{\Lb}{\mathfrak{b}}

\newcommand{\Lf}{\mathfrak{f}}
\newcommand{\Lg}{\mathfrak{g}}
\newcommand{\Lh}{\mathfrak{h}}

\newcommand{\Ll}{\mathfrak{l}}
\newcommand{\Ln}{\mathfrak{n}}

\newcommand{\Ls}{\mathfrak{s}}

\newcommand{\CA}{\mathcal{A}}

\newcommand{\CN}{\mathcal{N}}
\newcommand{\CO}{\mathcal{O}}

\newcommand{\la}{\lambda}
\newcommand{\om}{\omega}

\newcommand{\ra}{\rightarrow}

\renewcommand{\phi}{\varphi}

\setcounter{MaxMatrixCols}{20}

\begin{document}


\title[Semisimple decompositions]{Semisimple decompositions of Lie algebras and prehomogeneous modules}
\author[D. Burde]{Dietrich Burde}
\author[W. A. Moens]{Wolfgang Alexander Moens}
\address{Fakult\"at f\"ur Mathematik\\
Universit\"at Wien\\
Oskar-Morgenstern-Platz 1\\
1090 Wien \\
Austria}
\email{dietrich.burde@univie.ac.at}
\email{wolfgang.moens@univie.ac.at}
\date{\today}

\subjclass[2000]{Primary 17B30, 17D25}
\keywords{Post-Lie algebra, prehomogeneous module, disemisimple Lie algebra}

\begin{abstract}
We study {\em disemisimple} Lie algebras, i.e., Lie algebras which can be written as a vector space sum of two
semisimple subalgebras. We show that a Lie algebra $\Lg$ is disemisimple if and only if its solvable radical
coincides with its nilradical and is a prehomogeneous $\Ls$-module for a Levi subalgebra $\Ls$ of $\Lg$. We use the
classification of prehomogeneous $\Ls$-modules for simple Lie algebras $\Ls$ given by Vinberg to show that the solvable
radical of a disemisimple Lie algebra with simple Levi subalgebra is abelian. We extend this result to disemisimple Lie algebras
having no simple quotients of type $A$.
 \end{abstract}

\maketitle

\section{Introduction}

Decompositions of groups and algebras have been studied by various authors in different contexts.
A triple $(G,A,B)$, where $G$ is a group and $A$ and $B$ are subgroups of $G$, is called a {\em decomposition} if $G=AB$. 
Similarly, a triple $(\Lg,\La,\Lb)$, where $\Lg$ is a Lie algebra and $\La$ and $\Lb$ are Lie subalgebras of $\Lg$, is called a
{\em decomposition} if $\Lg=\La+\Lb$. Here $\La+\Lb$ denotes the vector space sum of $\La$ and $\Lb$, which
need not be direct in general. The same definition also applies to arbitrary non-associative algebras. \\[0.2cm]
Wieland and Kegel \cite{KE1} proved that a finite group $G$ that is the product $G=AB$ of two nilpotent subgroups $A$ and $B$,
is solvable. A group $G$ is called {\em dinilpotent}, if it can be written as a product of two nilpotent subgroups $A$ and $B$.
Several further results on dinilpotent groups were obtained, e.g., on their derived length \cite{CST}.
For results on finite groups that can be written as the product of two simple groups, see \cite{WAL} and the references therein. \\[0.2cm]
Bahturin and Kegel \cite{BAK} completely described all finite-dimensional associative algebras over a field
that are sums of two simple subalgebras. Several results on sums of simple and nilpotent subalgebras of Lie algebras have
been obtained in \cite{BTT}. \\[0.2cm]
Onishchik \cite{ON62,ON69} studied decompositions of reductive Lie groups and Lie algebras in the context of 
transitive actions on homogeneous spaces. He classified all reductive decompositions for complex reductive 
Lie algebras. In particular he gave the decompositions of simple Lie algebras into the sum of two simple 
subalgebras. For example, one has
\[
\mathfrak{sl}_{2n}(\C)=\mathfrak{sl}_{2n-1}(\C)+\mathfrak{sp}_{2n}(\C)
\]
for all $n\ge 2$. Here the given vector space sum $\mathfrak{sl}_{2n}(\C)=\Ls_1+\Ls_2$ is not direct. In fact, we have
$\Ls_1\cap \Ls_2\cong \mathfrak{sp}_{2n-1}(\C)$ for the subalgebras $\Ls_1\cong \mathfrak{sl}_{2n-1}(\C)$ and
$\Ls_2\cong \mathfrak{sp}_{2n}(\C)$ in this decomposition. \\[0.2cm]
Decompositions of Lie algebras very often also arise in the context of post-Lie algebras and post-Lie algebra structures on
pairs of Lie algebras. For example, in \cite{BU41} we have shown that for any pair $(\Lg,\Ln)$ admitting a post-Lie algebra structure
with $\Lg$ nilpotent, $\Ln$ must be solvable. The proof relies on the decomposition result by Goto \cite{GOT}, stating that
the sum of two nilpotent Lie subalgebras over the complex numbers is solvable. For related results on post-Lie algebra structures
see also \cite{BU44,BU51}. Furthermore, post-Lie algebra structures are closely related to Rota-Baxter operators, which in turn
are naturally related to decomposition results of Lie algebras, see \cite{BU59,BU64}. \\
For rigidity results in the context of post-Lie algebra structures on pairs $(\Lg,\Ln)$ of semisimple respectively reductive Lie algebras,
decomposition results on the sum of two semisimple Lie algebras are of great importance. In particular, the results of Onishchik mentioned
above can be used. However, in these results only decompositions of reductive Lie algebras are studied.
We want to study more generally {\em disemisimple} Lie algebras, i.e., arbitrary Lie algebras which can be written as a vector
space sum of two semisimple subalgebras. \\[0.2cm]
The paper is organized as follows. In section $2$ we introduce disemisimple Lie algebras and state some basic results, such as that
every semisimple Lie algebra is disemisimple, but the converse need not be true. For example,
\[
\Ls\Ll_n(\C)\ltimes V(n)=\Ls\Ll_n(\C)+\Ls\Ll_n(\C)
\]
is disemisimple, where $V(n)$ denotes the natural representation of $\Ls\Ll_n(\C)$, but not semisimple. We show that a disemisimple
Lie algebra is perfect and thus its solvable radical is nilpotent. We establish a direct connection between disemisimple Lie algebras
and prehomogeneous modules in Theorem $\ref{2.7}$. We state Vinberg's classification \cite{VIN} of prehomogeneous $\Ls$-modules for complex
simple Lie algebras $\Ls$, and also show by an example, that we can answer the question, whether a given $\Ls$-module is prehomogeneous
or not by a constructive algorithm, not only for simple $\Ls$, but also for semisimple $\Ls$. Here we also have the classification of
Sato and Kimura \cite{SAK} for irreducible $\Ls$-modules of semisimple Lie algebras $\Ls$. However, this classification
is not as explicit as Vinberg's classification for simple $\Ls$, because it involves castling classes and strong equivalence of
prehomogeneous modules. So it is difficult in general to decide whether or not a module for a given example is prehomogeneous. \\[0.2cm]
In section $3$ we show how to use Vinberg's classification
to prove that the solvable radical of a disemisimple Lie algebra with a simple Levi subalgebra is {\em abelian}, see Theorem $\ref{3.7}$.
For this, we prove a result relating the existence of a disemisimple Lie algebra with a non-abelian solvable radical to the existence of
prehomogeneous modules of a certain type involving the tensor product and the wedge product of irreducible $\Ls$-modules.
The question remains whether or not the solvable radical of any disemisimple Lie algebra is
abelian. Our proof for the case that we have a simple Levi subalgebra cannot be extended this way to the case where we have only a
semisimple Levi subalgebra. Still, we can use the classification by Sato and Kimuara to show that the solvable radical of
any disemisimple $A$-free Lie algebra is abelian, see Theorem $\ref{4.3}$. Here $A$-free means that the Lie algebra has no
simple quotients of type $A$, i.e., not isomorphic to a Lie algebra $\mathfrak{sl}_n(\C)$ for any $n\ge 2$.

\section{Disemisimple Lie algebras}

All Lie algebras are finite-dimensional and assumed to be complex. In analogy to dinilpotent groups
and Lie algebras we introduce the following name for the class of Lie algebras, which admit a decomposition into two semisimple
subalgebras.

\begin{defi}\label{2.1}
A Lie algebra $\Lg$ is called {\em disemisimple}, if it can be written as a vector space sum of two semisimple subalgebras
$\Ls_1$ and $\Ls_2$ of $\Lg$. In this case we write $\Lg=\Ls_1+\Ls_2$. 
\end{defi}  

Note that the vector space sum need not be direct, and that the subalgebras need not be ideals.
For the direct vector space sum we will write  $\Lg=\Ls_1\dotplus \Ls_2$, in order to distinguish it from the
direct sum of Lie algebras $\Lg=\Ls_1\oplus \Ls_2$. \\[0.2cm]
Of course, every semisimple Lie algebra $\Ls$ is disemisimple. One possible decomposition is $\Ls=\Ls+\Ls$.
The converse however need not be true in general.  Not every disemisimple Lie algebra is semisimple.
Indeed, we have the following counterexample, see Example $4.10$ of \cite{BU64}.

\begin{ex}\label{2.2}
Let $V(n)$ be the natural $\mathfrak{sl}_n(\C)$-module of dimension $n\ge 2$ and $\Lg=\Ls\Ll_n(\C)\ltimes V(n)$ be the semidirect
product, where $V(n)$ is considered as abelian Lie algebra. Let $\phi$ be the automorphism given by $\phi=e^{\ad(z)}$
for a certain $z$ in the solvable radical $V(n)$. Then  
\[
\Ls\Ll_n(\C)\ltimes V(n)=\Ls\Ll_n(\C)+\phi(\Ls\Ll_n(\C))
\]
is the vector space sum of two semisimple subalgebras. The Lie algebra $\Lg$ is perfect, but not semisimple.
\end{ex}

In general we write $\Lg=\Ls\ltimes \rad(\Lg)$ for a Levi decomposition, where $\rad(\Lg)$ is the solvable radical of $\Lg$.
When we write it  as an $\Ls$-module like $V(n)$ in the above example, then we consider it as an abelian Lie algebra.
We denote by $\nil(\Lg)$ the nilradical of $\Lg$. \\[0.2cm]
In the following we want to study the structure of disemisimple Lie algebras in more detail.

\begin{lem}\label{2.3}
Let $\Lg=\Ls_1 + \Ls_2$ be a disemisimple Lie algebra. Then $\Lg$ is perfect and the solvable radical of $\Lg$
coincides with the nilradical.
\end{lem}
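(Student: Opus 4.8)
The plan is to prove the two assertions in turn, first that $\Lg$ is perfect, and then that $\rad(\Lg) = \nil(\Lg)$, with the second following quickly from the first.

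For perfectness, write $\Lg = \Ls_1 + \Ls_2$ with each $\Ls_i$ semisimple, hence perfect: $[\Ls_i,\Ls_i] = \Ls_i$. I would compute the derived algebra $[\Lg,\Lg] = [\Ls_1 + \Ls_2, \Ls_1 + \Ls_2] = [\Ls_1,\Ls_1] + [\Ls_1,\Ls_2] + [\Ls_2,\Ls_2] \supseteq \Ls_1 + \Ls_2 = \Lg$, since $[\Ls_1,\Ls_1] = \Ls_1$ and $[\Ls_2,\Ls_2] = \Ls_2$ already exhaust $\Lg$. Thus $[\Lg,\Lg] = \Lg$, so $\Lg$ is perfect. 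This step is essentially immediate and I do not expect any obstacle here.

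For the second assertion, let $\Lr = \rad(\Lg)$ and $\Ln = \nil(\Lg)$; the inclusion $\Ln \subseteq \Lr$ always holds, so I must show $\Lr \subseteq \Ln$. The standard fact I would invoke is that for any Lie algebra, $[\Lg, \rad(\Lg)] \subseteq \nil(\Lg)$ — indeed $[\Lg,\Lr]$ is a nilpotent ideal of $\Lg$ (this is a classical consequence of Lie's theorem / the structure of the radical; it can also be phrased as $\nil(\Lg) \supseteq [\Lg,\Lg] \cap \rad(\Lg)$ together with $[\Lg,\Lr] \subseteq [\Lg,\Lg] \cap \Lr$). Since $\Lg$ is perfect, $\Lg = [\Lg,\Lg]$, and therefore $\Lr = [\Lg,\Lg] \cap \Lr \subseteq \nil(\Lg)$; more directly, $\Lr = [\Lg,\Lg]\cap \Lr$ because $\Lr\subseteq\Lg=[\Lg,\Lg]$, and $[\Lg,\Lg]\cap\rad(\Lg)\subseteq\nil(\Lg)$ is the standard fact. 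Hence $\Lr \subseteq \Ln$, and combined with $\Ln \subseteq \Lr$ we get $\rad(\Lg) = \nil(\Lg)$.

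The only point requiring a little care is the precise classical statement being quoted in the last step; I would cite it as the well-known fact that $[\Lg,\Lg] \cap \rad(\Lg) \subseteq \nil(\Lg)$ (equivalently, $\rad([\Lg,\Lg]) = \nil([\Lg,\Lg]) = [\Lg,\Lg]\cap\nil(\Lg)$), valid over a field of characteristic zero. Given that, the argument is short: perfectness does all the work, since it collapses $\rad(\Lg) = \rad(\Lg) \cap \Lg = \rad(\Lg)\cap[\Lg,\Lg] \subseteq \nil(\Lg)$. So the main — and really the only — substantive observation is the computation $[\Lg,\Lg] = \Lg$ from the decomposition into perfect subalgebras; everything else is a standard structural fact about the radical and the nilradical in characteristic zero.
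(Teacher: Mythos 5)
Your proposal is correct and follows essentially the same route as the paper: first the computation $[\Lg,\Lg]=[\Ls_1,\Ls_1]+[\Ls_2,\Ls_2]+[\Ls_1,\Ls_2]=\Lg$, then the classical characteristic-zero fact that $[\Lg,\rad(\Lg)]\subseteq\nil(\Lg)$ combined with perfectness. The only (cosmetic) difference is that the paper passes through a Levi decomposition to write $\rad(\Lg)=[\Ls,\rad(\Lg)]$ before invoking that fact, whereas you intersect $\rad(\Lg)$ directly with $[\Lg,\Lg]$; both are valid.
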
  

\begin{proof}
We have
\begin{align*}
[\Lg,\Lg] & = [\Ls_1+\Ls_2, \Ls_1+\Ls_2] \\
          & = [\Ls_1,\Ls_1]+[\Ls_2,\Ls_2]+ [\Ls_1,\Ls_2] \\
          & = \Ls_1+\Ls_2+ [\Ls_1,\Ls_2] \\
          & = \Lg.
\end{align*}
Denote by $\Ls$ a Levi subalgebra of $\Lg$. Then we have
\[
\Ls\dotplus \rad(\Lg)=\Lg=[\Lg,\Lg]=[\Ls\dotplus \rad(\Lg), \Ls\dotplus \rad(\Lg)]=\Ls \dotplus [\Ls,\rad(\Lg)].
\]
It follows that $\rad(\Lg)=[\Ls,\rad(\Lg)]\subseteq [\Lg,\rad(\Lg)]\subseteq \nil(\Ln)$. In the last step we have used
that $D(\rad(\Lg))\subseteq \nil(\Ln)$ for all derivations $D\in \Der(\Lg)$, and hence in particular for $D=\ad(x)$.
It follows that $\rad(\Lg)$ is nilpotent.
\end{proof}  

It turns out that decompositions of Lie algebras and disemisimple Lie algebras are related to prehomogeneous modules. 
Let $G$ be a connected complex algebraic group. Then a rational $G$-module $V$ is called {\em prehomogeneous} if $V$ has
a Zariski-open $G$-orbit $\CO$. It is called {\em \'etale} if in addition $\dim (\CO)=\dim (G)$, see \cite{SAK,BU53}.
If $G$ is semisimple with Lie algebra $\Lg$, then a $G$-module $V$ given by a representation $\rho$ is prehomogeneous
if and only if $d\rho$ is a prehomogeneous $\Lg$-module. This is defined for Lie algebras as follows.  

\begin{defi}\label{2.4}
Let $\Lg$ be a Lie algebra. A $\Lg$-module $V$ is called {\em prehomogeneous} if there exists a vector $v\in V$
such that $\Lg\cdot v=V$. It is called {\em \'etale} if in addition $\dim (V)=\dim (\Lg)$.
\end{defi}

The evaluation map $e_v\colon \Lg\ra V$ given by $x\mapsto x\cdot v$ is linear. So we always have
\[
\dim (V)\le \dim (\Lg)
\]
for a prehomogeneous $\Lg$-module. \\[0.2cm]
Note that the zero module is prehomogeneous. Vinberg classified prohomogeneous $\Ls$-modules of simple
complex Lie algebras $\Ls$ in \cite{VIN}. He used the term {\em transitive} for prehomogeneous. Sato and Kimura gave
a classification of rational prehomogeneous $G$-modules for connected semisimple algebraic groups. For the irreducible case see
\cite{SAK}, Proposition $1$ on page $143$ and $144$. This classification is up to castling equivalence. \\[0.2cm]
For semisimple Lie algebras the following result is known for \'etale $\Ls$-modules, see for example
\cite{BU53}, Corollary $4.5$.

\begin{prop}\label{2.5}
Let $\Ls$ be a semisimple Lie algebra over a field of characteristic zero. Then $\Ls$ admits no \'etale $\Ls$-module.
\end{prop}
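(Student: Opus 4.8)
The plan is to argue by contradiction. Suppose $V$ is an \'etale $\Ls$-module with base point $v$, and write the action as $x\cdot w=\rho(x)w$. Then the evaluation map $e_v\colon\Ls\to V$, $x\mapsto x\cdot v$, is surjective by prehomogeneity and injective because $\dim V=\dim\Ls$, hence a linear isomorphism. Transporting the module structure of $V$ back along $e_v$ turns $\Ls$ into a left-symmetric (pre-Lie) algebra whose commutator is the given Lie bracket: put $x\bullet y:=e_v^{-1}(x\cdot e_v(y))$, so that the left multiplications $L_x\colon y\mapsto x\bullet y$ satisfy $L_x=e_v^{-1}\circ\rho(x)\circ e_v$ and hence form a representation of $\Ls$ on $\Ls$. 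The module identity gives $e_v(x\bullet y-y\bullet x)=x\cdot(y\cdot v)-y\cdot(x\cdot v)=[x,y]\cdot v=e_v([x,y])$, so $x\bullet y-y\bullet x=[x,y]$; combining this with $[L_x,L_y]=L_{[x,y]}$ yields the left-symmetry identity $(x\bullet y)\bullet z-x\bullet(y\bullet z)=(y\bullet x)\bullet z-y\bullet(x\bullet z)$. Crucially, $e:=e_v^{-1}(v)$ is a right identity: $x\bullet e=e_v^{-1}(x\cdot v)=x$ for every $x$, so the right multiplication $R_e\colon x\mapsto x\bullet e$ equals $\id_\Ls$.

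Next I would extract a trace identity from left-symmetry. Applied to the triple $(x,z,y)$, it gives the operator identity $[L_x,R_y]=R_{x\bullet y}-R_yR_x$ on $\Ls$, where $R_a(x)=x\bullet a$. Taking traces, using $\tr(R_xR_y)=\tr(R_yR_x)$ and symmetry in $x\leftrightarrow y$, gives $\tr(R_{x\bullet y})=\tr(R_{y\bullet x})$, hence $\tr(R_{[x,y]})=0$ for all $x,y\in\Ls$. Since a semisimple Lie algebra is perfect, $\Ls=[\Ls,\Ls]$, and so $\tr(R_u)=0$ for every $u\in\Ls$ by linearity. Taking $u=e$ gives $0=\tr(R_e)=\dim_\C\Ls$, which is absurd for $\Ls\neq 0$ (the zero Lie algebra being trivially excluded). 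This contradiction proves the proposition.

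The one genuinely creative point is the recognition that the \'etale hypothesis is exactly what supplies a right identity $e$ for the induced pre-Lie product; without it, the vanishing of all $\tr(R_u)$ is no obstruction, as perfect non-semisimple Lie algebras can carry pre-Lie structures. Everything else is routine verification: that $\bullet$ is left-symmetric, and the operator identity $[L_x,R_y]=R_{x\bullet y}-R_yR_x$, are both immediate consequences of the left-symmetry relation. An alternative, more geometric route is to pass to a connected Lie group $G$ with Lie algebra $\Ls$, observe that an \'etale module turns $V\cong\Ls$ into a prehomogeneous vector space with finite generic isotropy and $\dim V=\dim G$, and derive a contradiction from the absence of nonconstant relative invariants for semisimple $G$; but the pre-Lie/trace argument above is shorter and self-contained, so that is the one I would write out.
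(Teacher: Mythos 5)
Your proof is correct. The paper does not prove Proposition~\ref{2.5} itself: it cites \cite{BU53}, Corollary~4.5, and, for the left-symmetric-algebra formulation, \cite{HEL}, Corollary~21. Your argument is essentially that second, Helmstetter-style route written out in full: the \'etale hypothesis makes the evaluation map $e_v$ a linear isomorphism, the transported product $x\bullet y=e_v^{-1}(x\cdot e_v(y))$ is left-symmetric with commutator equal to the Lie bracket and with right identity $e=e_v^{-1}(v)$, and the operator identity $[L_x,R_y]=R_{x\bullet y}-R_yR_x$ forces $\tr(R_u)=0$ for all $u\in[\Ls,\Ls]=\Ls$, contradicting $\tr(R_e)=\tr(\id)=\dim\Ls\neq 0$. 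All the individual verifications (left-symmetry of $\bullet$ from $[L_x,L_y]=L_{[x,y]}$ and $x\bullet y-y\bullet x=[x,y]$, the operator identity, the trace computation) check out. Two small remarks. First, your argument uses only that $\Ls$ is perfect and nonzero, so it actually proves the stronger statement that no nonzero perfect Lie algebra admits an \'etale module; this meshes nicely with Lemma~\ref{2.3}, which shows disemisimple Lie algebras are perfect. Second, the final contradiction should be phrased as $\tr(R_e)=\dim\Ls$ viewed in the ground field $k$ rather than in $\C$, since the proposition is stated over an arbitrary field of characteristic zero; this is precisely the point where the characteristic-zero hypothesis (or at least $\mathrm{char}(k)\nmid\dim\Ls$) is used, and it would be worth flagging explicitly.
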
  

This result has been stated earlier in the literature, but in different contexts and with different notations.
For the result in terms of left-symmetric algebras, see \cite{HEL}, Corollary $21$.

\begin{cor}\label{2.6}
Let $\Ls$ be a semisimple Lie algebra over a field of characteristic zero and $V$ be a nonzero prehomogeneous
$\Ls$-module. Then we have $2\le \dim (V)\le \dim (\Ls)-1$.   
\end{cor}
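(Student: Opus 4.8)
The plan is to derive Corollary~\ref{2.6} directly from Proposition~\ref{2.5} together with the elementary inequality $\dim(V)\le \dim(\Ls)$ that was observed right after Definition~\ref{2.4}. Let $V$ be a nonzero prehomogeneous $\Ls$-module, so there is a vector $v\in V$ with $\Ls\cdot v=V$, and let $e_v\colon \Ls\ra V$ denote the evaluation map $x\mapsto x\cdot v$, which is linear and surjective.

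First I would argue for the upper bound $\dim(V)\le \dim(\Ls)-1$. Since $e_v$ is surjective, $\dim(V)\le \dim(\Ls)$. If equality held, then $e_v$ would be a linear isomorphism, which is exactly the condition $\dim(V)=\dim(\Ls)$ in Definition~\ref{2.4}, i.e. $V$ would be an \'etale $\Ls$-module. This contradicts Proposition~\ref{2.5}, so the inequality is strict: $\dim(V)\le \dim(\Ls)-1$.

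Next I would argue for the lower bound $\dim(V)\ge 2$. Since $V\ne 0$ we certainly have $\dim(V)\ge 1$, so it remains to rule out $\dim(V)=1$. A one-dimensional $\Ls$-module is given by a Lie algebra homomorphism $\Ls\ra \C$; but $\Ls$ is semisimple, hence perfect, so the only such homomorphism is zero. Thus a one-dimensional $\Ls$-module is trivial, and for the trivial module we have $\Ls\cdot v=0\ne V$, so it is not prehomogeneous. Hence $\dim(V)\ge 2$.

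The two bounds together give $2\le \dim(V)\le \dim(\Ls)-1$. I do not expect any serious obstacle here: the corollary is a short formal consequence of the preceding proposition and the remarks surrounding Definition~\ref{2.4}; the only point requiring a word of justification is that a nonzero prehomogeneous module cannot be one-dimensional, which follows from the perfectness of $\Ls$. (One might add as a remark that the inequality also forces $\dim(\Ls)\ge 3$, recovering that $\mathfrak{sl}_2(\C)$ is the smallest semisimple Lie algebra, but this is not needed for the statement.)
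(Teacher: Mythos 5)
Your proof is correct and follows essentially the same route as the paper: the upper bound comes from the surjectivity of $e_v$ together with Proposition~\ref{2.5} ruling out the \'etale case, and the lower bound from the fact that a one-dimensional module cannot be prehomogeneous. The only difference is that you make explicit the (implicit in the paper) observation that perfectness of $\Ls$ forces any one-dimensional $\Ls$-module to be trivial, which is a worthwhile clarification but not a change of approach.
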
  

\begin{proof}
The trivial $1$-dimensional module is not prehomogeneous. So we have $\dim (V)\ge 2$. We always have $\dim (V)\le \dim (\Ls)$
and equality is not possible by Proposition $\ref{2.5}$.
\end{proof}  

Our result concerning disemisimple Lie algebras and prehomogeneous modules is as follows.

\begin{thm}\label{2.7}
A Lie algebra $\Lg$ is disemisimple if and only if its solvable radical coincides with its nilradical and
is a prehomogeneous $\Ls$-module for a Levi subalgebra $\Ls$ of $\Lg$.
\end{thm}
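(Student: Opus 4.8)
The plan is to prove both implications by making explicit the relationship between a decomposition $\Lg=\Ls_1+\Ls_2$ and the $\Ls$-module structure on $\rad(\Lg)$, where $\Ls$ is a Levi subalgebra. For the forward direction, suppose $\Lg=\Ls_1+\Ls_2$ is disemisimple. By Lemma~$\ref{2.3}$ we already know $\Lg$ is perfect and $\rad(\Lg)=\nil(\Lg)$, so it remains to show $\rad(\Lg)$ is a prehomogeneous $\Ls$-module for a suitable Levi subalgebra $\Ls$. The key observation is that $\Ls_1$, being semisimple, is contained in a Levi subalgebra of $\Lg$; in fact $\dim(\Ls_i)\le\dim(\Lg)-\dim(\rad(\Lg))=\dim(\Ls)$, and by conjugacy of Levi subalgebras (Malcev's theorem) we may assume $\Ls_1\subseteq\Ls$, hence $\Ls_1=\Ls$ by dimension count, so actually $\Ls_1$ is itself a Levi subalgebra. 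Now consider the projection $\pi\colon\Lg\to\rad(\Lg)$ along $\Ls=\Ls_1$. Since $\Lg=\Ls_1+\Ls_2$, the image $\pi(\Ls_2)$ spans $\rad(\Lg)$ as a vector space. The second step is to identify $\pi(\Ls_2)$ with the image of an evaluation map: write an arbitrary element of $\Ls_2$ as $s_1+r$ with $s_1\in\Ls_1$, $r\in\rad(\Lg)$; I expect the set of such $r$ to be exactly $\{e^{\ad z}$-type translates$\}$ — more precisely, $\Ls_2=\phi(\Ls_1)$ for an automorphism $\phi$ which, because $\Lg$ is perfect with nilpotent radical, can be taken inner of the form $e^{\ad z}$ with $z\in\rad(\Lg)$ (this uses that such automorphisms act trivially on the Levi factor modulo the radical and that $H^1$-type obstructions vanish appropriately). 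Then $\pi(\phi(x))=\pi(e^{\ad z}x)$, and unwinding $e^{\ad z}x=x+[z,x]+\tfrac12[z,[z,x]]+\cdots$ shows $\pi(\phi(x))=[z,x]\bmod$ higher terms, but since $[\rad(\Lg),\rad(\Lg)]$ is itself a submodule one can iterate/induct to conclude that $\Ls\cdot z=\rad(\Lg)$, i.e.\ $z$ is a prehomogeneous vector.

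For the converse, suppose $\rad(\Lg)=\nil(\Lg)=:\Ln$ and that $\Ln$ is a prehomogeneous $\Ls$-module for a Levi subalgebra $\Ls$, so there is $z\in\Ln$ with $\Ls\cdot z=\Ln$. Set $\Ls_1:=\Ls$ and $\Ls_2:=e^{\ad z}(\Ls)$. Since $\Ln$ is nilpotent, $\ad z$ is a nilpotent derivation of $\Lg$, so $\phi:=e^{\ad z}\in\Aut(\Lg)$ and $\Ls_2$ is a semisimple subalgebra. I claim $\Lg=\Ls_1+\Ls_2$. For any $x\in\Ls$, $\phi(x)=x+[z,x]+\tfrac12[z,[z,x]]+\cdots$, so $\phi(x)-x\in[z,\Ls]+[\Ln,\Ln]$. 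Running over all $x\in\Ls$, the vectors $\phi(x)-x\in\Ls_1+\Ls_2$ span a subspace of $\Ln$ containing $[z,\Ls]=\Ls\cdot z=\Ln$ modulo $[\Ln,\Ln]$. A standard nilpotent-filtration / Nakayama-style argument (filtering $\Ln$ by its lower central series and noting that $\Ls\cdot z$ already surjects onto $\Ln/[\Ln,\Ln]$ forces $\Ls_1+\Ls_2\supseteq\Ln$ exactly) then gives $\Ln\subseteq\Ls_1+\Ls_2$, and since $\Ls=\Ls_1\subseteq\Ls_1+\Ls_2$ and $\Lg=\Ls\dotplus\Ln$, we conclude $\Lg=\Ls_1+\Ls_2$, so $\Lg$ is disemisimple.

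The main obstacle I anticipate is the forward direction: justifying that the two semisimple subalgebras in \emph{any} decomposition can be normalized so that one of them is a genuine Levi subalgebra and the other is an $e^{\ad z}$-translate of it with $z$ in the radical. The dimension bound $\dim(\Ls_i)\le\dim(\Ls)$ together with the conjugacy of Levi subalgebras handles the first point, but the second point — that $\Ls_2$ is conjugate to $\Ls_1$ by an automorphism of the specific form $e^{\ad z}$, $z\in\rad(\Lg)$ — requires care: one uses that $\Ls_1$ and $\Ls_2$, both being Levi subalgebras by the dimension count, are conjugate under $\Inn(\Lg)$, and that the relevant inner automorphism can be chosen in $\exp(\ad\nil(\Lg))$ because $\Lg$ is perfect (so there are no semisimple inner automorphisms coming from a center or torus to worry about). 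The nilpotent-filtration argument converting "spans modulo $[\Ln,\Ln]$" into "spans on the nose" in both directions is routine but must be stated carefully since $\Ls_1+\Ls_2$ is only a subspace, not obviously a submodule.
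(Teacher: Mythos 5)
Your strategy coincides with the paper's --- normalize the two summands, use Malcev conjugacy to write $\Ls_2=e^{\ad(z)}(\Ls_1)$ with $z\in\nil(\Lg)$, and translate the decomposition into the statement $[z,\Ls_1]=\nil(\Lg)$ --- but two steps are not correct as written. First, the normalization: from $\Ls_1\subseteq\Ls$ and $\dim(\Ls_1)\le\dim(\Ls)$ you cannot conclude $\Ls_1=\Ls$; nothing bounds $\dim(\Ls_1)$ from below, and the summands of a disemisimple decomposition genuinely need not be Levi subalgebras --- in Onishchik's decomposition $\mathfrak{sl}_{2n}(\C)=\mathfrak{sl}_{2n-1}(\C)+\mathfrak{sp}_{2n}(\C)$ neither summand is one. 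The correct move, which is what the paper does, is to replace each $\Ls_i$ by a Levi subalgebra containing it: the vector space sum can only grow, so the decomposition is preserved, and then the Levi--Malcev theorem gives the desired $z\in\nil(\Lg)$ (your appeal to perfectness and vanishing $H^1$ is unnecessary here; this is the standard conjugacy statement).

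The second and more serious gap is the step you dismiss as routine. Writing $\phi=e^{\ad(z)}$ and $\Ln=\nil(\Lg)$, both implications reduce to comparing the subspaces $(\phi-\id)(\Ls_1)$ and $[z,\Ls_1]$ of $\Ln$, which you only know agree modulo $[\Ln,\Ln]$. A Nakayama-style induction on the lower central series cannot upgrade this to equality: for an arbitrary subspace $U\subseteq\Ln$ with $U+[\Ln,\Ln]=\Ln$ one need not have $U=\Ln$ (take $\Ln$ the Heisenberg algebra and $U$ the span of the two generators), and neither $[z,\Ls_1]$ nor $(\phi-\id)(\Ls_1)$ is a subalgebra or an $\Ls_1$-submodule, so no filtration argument applies to them. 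What actually closes the gap --- and is the crux of the paper's proof --- is the exact factorization $\phi-\id=\psi\circ\ad(z)$, where $\psi=\id+\tfrac{1}{2!}\ad(z)+\tfrac{1}{3!}\ad(z)^2+\cdots$ restricts to a unipotent, hence bijective, linear map of $\Ln$. This gives $(\phi-\id)(\Ls_1)=\psi([z,\Ls_1])$ on the nose, so $(\phi-\id)(\Ls_1)=\Ln$ if and only if $[z,\Ls_1]=\Ln$. Combined with your (correct) projection argument showing that $\Lg=\Ls_1+\Ls_2$ forces $(\phi-\id)(\Ls_1)=\Ln$, this identity settles the forward direction, and read in the other direction it settles the converse; without it your argument does not go through.
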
  

\begin{proof}
Assume that $\Lg$ is disemisimple. Then there exist semisimple subalgebras $\Ls_1,\Ls_2$ of
$\Lg$ with $\Lg=\Ls_1+\Ls_2$. Since each semisimple subalgebra of $\Lg$ is contained in a maximal
semisimple subalgebra, we may assume that $\Ls_1$ and $\Ls_2$ are {\em Levi subalgebras} of $\Lg$.
According to Lemma $\ref{2.3}$, $\Lg$ is perfect and $\rad(\Lg)=\nil(\Lg)$. So it remains to show that $\nil(\Lg)$
is a prehomogeneous module for, say, the Levi subalgebra $\Ls_1$. By the theorem of Levi-Malcev there exists an element
$z\in \nil(\Lg)$, such that the automorphism $\phi=e^{\ad (z)}\in \Aut(\Lg)$ maps $\Ls_1$ onto $\Ls_2$. So $\phi(s)$ for
$s\in \Ls_1$ is given explicitly by
\[
\phi(s)=s+[z,s]+\frac{1}{2!}[z,[z,s]]+ \frac{1}{3!}[z,[z,[z,s]]]+ \cdots,
\]
and we have
\[
\Lg=\Ls_1+\Ls_2=\Ls_1+\phi(\Ls_1)=\Ls_1+\Ls_1+ (\phi-\id)(\Ls_1)=\Ls_1+(\phi-\id)(\Ls_1).
\]  
Since $\nil(\Lg)$ is an ideal, the formula for $\phi(s)$ implies that $(\phi -\id)(\Ls_1)\subseteq \nil(\Lg)$.
So we have the inclusions
\[
  \Ls_1 \dotplus \nil(\Lg)\subseteq \Lg\subseteq \Ls_1\dotplus (\phi-\id)(\Ls_1)\subseteq \Ls_1\dotplus \nil(\Lg),
\]  
which imply that
\[
\nil(\Lg)=(\phi-\id)(\Ls_1).
\]  
Define a linear map $\psi\colon \nil(\Lg)\rightarrow \nil(\Lg)$ by
\[
\psi=\id+\frac{1}{2!}\ad(z)+\frac{1}{3!}\ad(z)^2+\cdots .
\]
Since $\ad(z)$ is a nilpotent endomorphism of $\nil(\Lg)$ we see that $\psi$ is a vector space automorphism
of $\nil(\Lg)$. We have $(\phi -\id)(s)=\psi([s,-z])$ for all $s\in \Ls_1$. This implies
\[
\nil(\Lg)=(\phi-\id)(\Ls_1)=\psi([\Ls_1,-z]),
\]
and hence $\nil(\Lg)=\psi^{-1}(\nil(\Lg))=[\Ls_1,-z]$. In other words, $\nil (\Lg)$ is a prehomogeneous $\Ls_1$-module
for the Levi subalgebra $\Ls_1$ under the natural action $s\cdot v=[s,v]$ for $s\in \Ls_1$ and $v\in \nil(\Lg)$. \\[0.2cm]
Conversely assume that $\rad(\Lg)=\nil(\Lg)$, and that this is a prehomogeneous $\Ls_1$-module for a Levi subalgebra
$\Ls_1$ of $\Lg$. Let $z$ be an element in $\nil(\Lg)$ such that $\nil(\Lg)=[\Ls_1,-z]=[z,\Ls_1]$ and define an automorphism
of $\Lg$ by $\phi=e^{\ad(z)}$. Let $\Ls_2=\phi(\Ls_1)$. Then $\Ls_2$ also is a Levi subalgebra of $\Lg$ and we have,
with $\psi$ as above,
\begin{align*}
\Ls_1+\Ls_2 & = \Ls_1+(\phi-\id)(\Ls_1) \\
            & = \Ls_1+\psi ([z,\Ls_1]) \\
            & = \Ls_1+\psi(\nil(\Lg)) \\
            & = \Ls_1+\nil(\Lg) = \Lg.
\end{align*}  
\end{proof}

We obtain the following corollary to Theorem $\ref{2.7}$.

\begin{cor}
The disemisimple Lie algebras $\Lg$ whose solvable radical is abelian are precisely the Lie algebras which are isomorphic
to a semidirect sum  $\Lg=\Ls\ltimes V$, where $\Ls$ is a Levi subalgebra of $\Lg$ and $V$ is a prehomogeneous $\Ls$-module.
Then $\Lg=\Ls+\phi(\Ls)$ with $\phi=e^{\ad(z)}$ for some $z\in V$.
\end{cor}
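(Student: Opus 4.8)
The plan is to obtain the corollary by specializing Theorem $\ref{2.7}$ to the case where the solvable radical happens to be abelian; the extra hypothesis is precisely what promotes the abstract Levi decomposition to a genuine semidirect sum.

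First I would treat the forward implication. Suppose $\Lg$ is disemisimple with $\rad(\Lg)$ abelian. By Lemma $\ref{2.3}$ we have $\rad(\Lg)=\nil(\Lg)$, and Theorem $\ref{2.7}$ tells us that this common radical is a prehomogeneous $\Ls$-module for some Levi subalgebra $\Ls$ of $\Lg$. The Levi-Malcev theorem gives $\Lg=\Ls\ltimes\rad(\Lg)$, and since $\rad(\Lg)$ is abelian this is exactly a semidirect sum $\Lg=\Ls\ltimes V$ with $V=\rad(\Lg)$ an (abelian) prehomogeneous $\Ls$-module.

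For the converse, suppose $\Lg=\Ls\ltimes V$ with $\Ls$ a Levi subalgebra and $V$ a prehomogeneous $\Ls$-module. The first step is to identify $V$ with both the solvable radical and the nilradical: $V$ is a solvable ideal, so $V\subseteq\rad(\Lg)$, while the image of $\rad(\Lg)$ in $\Lg/V\cong\Ls$ is a solvable ideal of a semisimple Lie algebra, hence zero, giving $\rad(\Lg)\subseteq V$; thus $V=\rad(\Lg)$, and since $V$ is abelian the chain $V\subseteq\nil(\Lg)\subseteq\rad(\Lg)=V$ forces $\nil(\Lg)=V$ as well. Now $\rad(\Lg)=\nil(\Lg)=V$ is a prehomogeneous $\Ls$-module, so Theorem $\ref{2.7}$ shows that $\Lg$ is disemisimple, and its solvable radical $V$ is abelian by construction.

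Finally, I would record the explicit decomposition. By prehomogeneity pick $z\in V$ with $[z,\Ls]=V$ and set $\phi=e^{\ad(z)}\in\Aut(\Lg)$, which again maps the Levi subalgebra $\Ls$ to a Levi subalgebra. Since $V$ is abelian, $\ad(z)$ kills $V$, hence kills $[z,\Ls]$, so the series defining $\phi$ on $\Ls$ truncates: $\phi(s)=s+[z,s]$ (in the notation of the proof of Theorem $\ref{2.7}$ the automorphism $\psi$ is now the identity). Therefore $\Ls+\phi(\Ls)=\Ls+[z,\Ls]=\Ls+V=\Lg$. I do not anticipate a genuine obstacle here, since the content is already contained in Theorem $\ref{2.7}$; the only points that need a line of care are the identification $V=\rad(\Lg)=\nil(\Lg)$ in the converse direction and the collapse of the exponentials to first order in the abelian setting. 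One should also note that ``isomorphic to'' in the statement costs nothing, as a semidirect sum of a semisimple subalgebra with an abelian ideal automatically exhibits that subalgebra as a Levi subalgebra.
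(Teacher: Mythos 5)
Your proof is correct and takes essentially the same route the paper intends: the corollary is stated without a separate proof precisely because it is the specialization of Theorem \ref{2.7} (together with Lemma \ref{2.3}) to the case of an abelian radical, which is exactly what you carry out. Your added observations --- the identification $V=\rad(\Lg)=\nil(\Lg)$ in the converse direction and the truncation of $e^{\ad(z)}$ to $\id+\ad(z)$ because $\ad(z)^2=0$ on $\Lg$ when $V$ is an abelian ideal --- are accurate and merely make explicit what the paper leaves implicit.
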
  

Recall that $V=0$ is a prehomogeneous $\Ls$-module. This yields again the trivial fact that every semisimple Lie algebra
$\Ls$ is disemisimple with $\Ls=\Ls+\Ls$. We also recover Example $\ref{2.2}$, because for $\Lg=\mathfrak{sl}_n(\C)\ltimes V(n)$
the $\mathfrak{sl}_n(\C)$-module $V(n)$ is prehomogeneous. \\[0.2cm]
Now let us consider the case where $\Lg$ has a {\em simple} Levi subalgebra. As we already mentioned, Vinberg \cite{VIN}
classified all prehomogenous $\Ls$-modules for complex simple Lie algebras $\Ls$. For a given type of $\Ls$, denote by
$L(\om_i)$ the irreducible highest weight module with fundamental weight $\om_i$. Note that $V$ is prehomogeneous if and only if
its dual module $V^*$ is prehomogeneous.

\begin{thm}[Vinberg]\label{2.9}
Let $\Ls$ be a complex simple Lie algebra and $V\neq 0$ be an irreducible $\Ls$-module. Then $V$ is prehomogeneous for
$\Ls$ if and only if $V$ is contained in the following list.
\vspace*{0.5cm}
\begin{center}
\begin{tabular}{c|ccc}
$\Ls$ & $\dim(\Ls)$ & $V$ & $\dim(V)$  \\[4pt]
\hline
$A_{\ell}$, $\ell\ge 1$ & $\ell(\ell+2)$  & $L(\om_1),L(\om_{\ell})$ & $\ell+1$  \\[4pt]
$A_{2\ell}$, $\ell\ge 2$ & $4\ell(\ell+1)$  & $L(\om_2),L(\om_{2\ell-1})$ & $\ell(2\ell+1)$  \\[4pt]
$C_{\ell}$, $\ell\ge 2$ & $\ell(2\ell+1)$  & $L(\om_1)$ & $2\ell$  \\[4pt]
$D_5$ & $45$  & $L(\om_4),L(\om_5)$ & $16$  \\[4pt]
\end{tabular}
\end{center}
\vspace*{0.5cm}
\end{thm}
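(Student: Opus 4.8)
The statement is Vinberg's classification, so the plan is to reprove it in three stages: reduce the possible highest weights to a finite list, eliminate most of them by an invariant-theoretic obstruction, and verify prehomogeneity directly for the handful that survive. First, by the remark following Definition~\ref{2.4} every prehomogeneous $\Ls$-module satisfies $\dim V \le \dim\Ls$, and by Proposition~\ref{2.5} the equality case is impossible. Combining this with the Weyl dimension formula, one checks that for each Cartan type only finitely many dominant integral weights $\om$ satisfy $0 < \dim L(\om) < \dim\Ls$: for $A_\ell$ one is left with $L(\om_1)$ and $L(\om_\ell)$, the exterior powers $L(\om_k) = \Lambda^k(\C^{\ell+1})$ for small $k$, $L(2\om_1) = S^2(\C^{\ell+1})$ and its dual, and a few others; for $B_\ell, C_\ell, D_\ell$ with the natural module together with its $\Lambda^2$ and $S^2$, the (half-)spin modules, and finitely many other small weights; and for the exceptional types essentially only the minimal module (and, for $E_8$, the adjoint). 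I would write this explicit finite list out.

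Next I would invoke the fact that if $\Ls$ is semisimple and $V$ is prehomogeneous with dense orbit $\CO = \Ls\cdot v$, then every element of $\C[V]^{\Ls}$ is constant on $\CO$, hence on all of $V$, so that $\C[V]^{\Ls} = \C$ is necessary. This disposes of most of the candidates: the natural and $S^2$-type modules of $\mathfrak{so}_n(\C)$ and $\mathfrak{sl}_n(\C)$ via their quadratic, resp. determinantal, invariants; the modules $\Lambda^2(\C^{2n})$ of $\mathfrak{sl}_{2n}(\C)$ via the Pfaffian; the higher exterior powers $\Lambda^k(\C^n)$, $3\le k$, via their known invariants; almost all spin and half-spin modules of the orthogonal algebras via their invariant forms; the minimal modules of $G_2, F_4, E_6, E_7$ via their lowest-degree invariants; and the adjoint modules (and, for $E_8$, the minimal module) via Proposition~\ref{2.5}. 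What remains is only a short list.

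Finally, for each surviving module I would exhibit a vector $v$ with $\Ls\cdot v = V$ --- equivalently, compute the generic isotropy subalgebra $\Ls_v$ and check $\dim\Ls - \dim\Ls_v = \dim V$; over $\C$, surjectivity of the evaluation map $e_v$ at one point already makes the $\Ls$-orbit open, hence dense since $V$ is irreducible. Concretely: $\mathfrak{sl}_{\ell+1}(\C)$ is transitive on $\C^{\ell+1}\setminus\{0\}$, giving $L(\om_1)$ and, dually, $L(\om_\ell)$; $\mathfrak{sp}_{2\ell}(\C)$ is transitive on $\C^{2\ell}\setminus\{0\}$, giving the $C_\ell$ entry; a generic alternating form on $\C^{2\ell+1}$ has rank $2\ell$, the locus of maximal-rank forms is Zariski-open, and a determinant-one diagonal torus element rescales such a form, so already $\mathrm{SL}_{2\ell+1}$ has an open orbit on $\Lambda^2(\C^{2\ell+1}) = L(\om_2)$ (the constraint $\ell\ge 2$ only removes the overlap with the $A_\ell$ row, into which $A_2$ already falls); and the $16$-dimensional half-spin module of $\mathrm{Spin}_{10}$ has generic isotropy $\mathrm{Spin}_7 \ltimes \C^8$, of dimension $21 + 8 = 29 = 45 - 16$, so its generic orbit is open. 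Since $V$ is prehomogeneous if and only if $V^*$ is, the dual entries in the table come along for free.

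The main obstacle is the case-by-case analysis hidden in the last two stages for the ``medium-sized'' modules that pass the crude bound $\dim V < \dim\Ls$ but are \emph{not} prehomogeneous --- the higher $\Lambda^k(\C^n)$, the $S^2$-type modules, $\Lambda^2$ of even-dimensional natural modules, the larger spin and half-spin modules of the orthogonal algebras, and the small exceptional modules. There is no single uniform argument: some are ruled out by exhibiting an explicit invariant, while for others one must actually compute the generic stabilizer and find it strictly larger than $\dim\Ls - \dim V$, so that the generic orbit has positive codimension even though the naive dimension count looks favourable. Vinberg's treatment organizes these stabilizer computations through the theory of $\Z$-graded Lie algebras; carrying out this finite but lengthy bookkeeping, together with the verification that types $B_\ell$, $E_6$, $E_7$, $E_8$, $F_4$, $G_2$ contribute no entry, is what completes the classification.
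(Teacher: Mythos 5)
The paper does not prove this theorem at all: it is quoted as Theorem~7 of Vinberg's paper \cite{VIN}, so there is no internal argument to compare against. Judged on its own terms, your strategy is the standard (and correct) one for reproving such a classification: bound $\dim V$ by $\dim\Ls$ strictly (Proposition~\ref{2.5} rules out equality), enumerate the finitely many dominant weights per type that survive, kill most of them with the necessary condition $\C[V]^{\Ls}=\C$, and exhibit dense orbits for the rest. Your verifications of the positive cases are sound: transitivity of $\mathfrak{sl}_{\ell+1}$ and $\mathfrak{sp}_{2\ell}$ on nonzero vectors; the rank-$2\ell$ orbit in $\Lambda^2(\C^{2\ell+1})$ being a single $\mathrm{SL}_{2\ell+1}$-orbit because the $\mathrm{GL}$-stabilizer of a corank-one form already meets every determinant (one can confirm the count: the $\mathfrak{sl}_{2\ell+1}$-stabilizer has dimension $\ell(2\ell+1)+2\ell=4\ell(\ell+1)-\ell(2\ell+1)$); the generic isotropy $\mathfrak{spin}_7\ltimes\C^8$ of dimension $29=45-16$ for the half-spin module of $D_5$; and closure under duality.

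The genuine gap is that the negative half of the classification --- which is where essentially all the content of the theorem lives --- is only described, not executed. You would need to (i) actually write down, for each Cartan type, the full finite list of highest weights with $\dim L(\om)<\dim\Ls$ (this already requires the Weyl dimension formula case by case, e.g.\ $\Lambda^3(\C^{\ell+1})$ passes the bound for $A_\ell$ with $5\le\ell\le 7$, the spin modules of $B_\ell$ pass it for $\ell\le 6$, $\Lambda^2_0$ of $C_\ell$ passes it for all $\ell$, etc.), and (ii) dispose of each entry. The obstruction $\C[V]^{\Ls}=\C$ is necessary but not sufficient, so for each candidate you must either exhibit a nonconstant invariant explicitly (quadric, determinant, Pfaffian, the quartic on $\Lambda^3(\C^6)$, the degree-$7$ invariant on $\Lambda^3(\C^7)$, the cubic on the $27$-dimensional $E_6$-module, and so on) or compute the generic stabilizer and show it is too large; neither step is automatic, and a couple of the spin and $\Lambda^3$ cases are notoriously delicate. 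As written, your text is a correct and well-organized plan for a proof, with the honest admission that the finite but lengthy bookkeeping is omitted; it is not yet a proof. Given that the paper itself treats this as an external citation, the most defensible course is to do the same, or else to commit to carrying out the full case analysis.
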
  
This is Theorem $7$ in \cite{VIN}. Vinberg' notation relates to our notation by
\[
  T_1=L(\om_1), \; T_1'=L(\om_{\ell}),\; T_2=L(\om_2), \; T_2'=L(\om_{\ell-1}),
\]
where he denotes the dual module of $V$ by $V'$. So $L(\om_1)^*\cong L(\om_{\ell})$ and $L(\om_2)^*\cong L(\om_{\ell-1})$. \\[0.2cm]
Vinberg also classified the reducible prehomogeneous $\Ls$-modules in Theorem $8$
of \cite{VIN}.

\begin{thm}[Vinberg]\label{2.10}
Let $\Ls$ be a complex simple Lie algebra and $V\neq 0$ be a reducible $\Ls$-module. Then $V$ is prehomogeneous for
$\Ls$ if and only if $V$ is contained in the following list.
\vspace*{0.5cm}
\begin{center}
\begin{tabular}{c|ccc}
$\Ls$ & $\dim(\Ls)$ & $V$ & $\dim(V)$  \\[4pt]
\hline
$A_{\ell}$, $\ell\ge 2$ & $\ell(\ell+2)$  & $mL(\om_1), \, mL(\om_{\ell}),\, 2\le m\le \ell$ & $m(\ell+1)$  \\[4pt]
$A_{2\ell}$, $\ell\ge 2$ & $4\ell(\ell+1)$  & $L(\om_1)\oplus L(\om_{2\ell-1}), \, L(\om_2)\oplus L(\om_{2\ell})$
                        & $(\ell +1)(2\ell+1)$  \\[4pt]
 & $4\ell(\ell+1)$  & $L(\om_2)\oplus L(\om_2), \, L(\om_{2\ell-1})\oplus L(\om_{2\ell-1})$
                        & $2\ell(2\ell+1)$  \\[4pt]
\end{tabular}
\end{center}
\vspace*{0.5cm}
\end{thm}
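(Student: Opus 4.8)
The statement is Vinberg's \cite{VIN} (Theorem $8$); the plan is to deduce it from the irreducible classification of Theorem \ref{2.9} by dimension bookkeeping and then to settle the finitely many survivors by hand. First I would observe that prehomogeneity passes to quotient modules: if $e_v\colon\Lg\to V$, $x\mapsto x\cdot v$, is onto and $V\twoheadrightarrow W$ is a surjection of $\Ls$-modules, then the composite $\Lg\to W$ is onto. Since $\Ls$ is semisimple, every submodule is also a quotient, so every submodule of a prehomogeneous $\Ls$-module is prehomogeneous. Writing $V=\bigoplus_i V_i$ with the $V_i$ irreducible, it follows that every $V_i$, and every partial direct sum, is prehomogeneous; in particular each $V_i$ occurs in Vinberg's irreducible list, and by Corollary \ref{2.6} we have $\sum_i\dim(V_i)=\dim(V)\le\dim(\Ls)$.

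Next I would run this bound through the types. For $\Ls$ of type $C_\ell$ the only irreducible prehomogeneous module is $L(\om_1)$, of dimension $2\ell$, so the candidates are the multiples $mL(\om_1)$ with $2\ell m\le\dim(\Ls)=\ell(2\ell+1)$, i.e.\ $m\le\ell$. For $\Ls=\mathfrak{sl}_n$ the irreducible candidates are $L(\om_1)=\C^n$ and its dual, and -- for $n$ even -- also $L(\om_2)=\Lambda^2\C^n$ and its dual; here $mn\le n^2-1$ forces $m\le n-1$ for copies of $L(\om_1)$, and the remaining combinations are likewise cut down to the short list appearing in the theorem. For $D_5$ only the two half-spin modules $L(\om_4),L(\om_5)$, of dimension $16$, are irreducible and prehomogeneous, so $\dim(V)\le\dim(\Ls)=45$ leaves only the three pairs $L(\om_4)\oplus L(\om_4)$, $L(\om_5)\oplus L(\om_5)$ and $L(\om_4)\oplus L(\om_5)$. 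At this point only an explicit finite list of candidate modules remains.

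Then I would test each candidate. For those that do appear in the theorem I would exhibit a generic vector: identifying $mL(\om_1)$ for $\mathfrak{sl}_{\ell+1}$ with the space of $(\ell+1)\times m$ matrices on which $\mathfrak{sl}_{\ell+1}$ acts by left multiplication, a matrix $A$ of full column rank (which exists as $m\le\ell+1$) satisfies $\mathfrak{gl}_{\ell+1}\cdot A=\mathrm{Mat}_{(\ell+1)\times m}$, and when $m\le\ell$ the functional $\tr$ does not vanish on $\{\,X\colon XA=0\,\}$, so already $\mathfrak{sl}_{\ell+1}\cdot A$ is everything; the extra $A_{2\ell}$-modules on the list are handled by the analogous computation with a generic pencil of alternating forms, respectively with a generic vector together with a generic alternating form. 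For those that must be excluded I would produce an $\Ls$-equivariant obstruction to surjectivity of $e_v$: writing $mL(\om_1)$ for $\mathfrak{sp}_{2\ell}$ with $2\ell\times m$ matrices, one has $X^\top J=-JX$, hence $JX$ symmetric, hence $A^\top J(XA)=A^\top(JX)A$ a symmetric $m\times m$ matrix for every $X\in\mathfrak{sp}_{2\ell}$, so $\im(e_A)$ lies in a proper subspace when $m\ge2$; the contraction pairings on $\C^n\oplus(\C^n)^{*}$ and on $\Lambda^2\C^n\oplus\Lambda^2(\C^n)^{*}$, and for $n$ large the map $\C^n\otimes\Lambda^2\C^n\to\Lambda^3\C^n$, rule out the mixed type-$A$ candidates not on the list.

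The main obstacle is the handful of genuinely borderline cases, where $\dim(V)$ is strictly smaller than $\dim(\Ls)$ and all of the above devices fail: the $D_5$ pairs of spin modules, the low-rank instances of the mixed type-$A$ modules, and, in the other direction, verifying that $L(\om_1)\oplus L(\om_{2\ell-1})\cong L(\om_1)\oplus L(\om_2)^{*}$ and $L(\om_2)\oplus L(\om_2)$ really are prehomogeneous although their dimension nearly equals $\dim(\Ls)$. For these one has to compute the generic isotropy subgroup and study how the remaining summand restricts to it: for $D_5$ the stabiliser of a pure spinor in $L(\om_4)$ is $\mathrm{Spin}_7\ltimes\C^8$, and one checks that its generic stabiliser on the complementary spin module has dimension larger than $\dim(\Ls)-\dim(V)=13$; for type $A_{2\ell}$ one uses the orbit structure of alternating forms in odd dimension, the delicate point being that a module may be prehomogeneous for $\mathrm{GL}_n$ but not for $\mathrm{SL}_n$, so the central torus must be tracked. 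This isotropy analysis -- essentially a castling computation -- is precisely what is carried out in \cite{VIN}.
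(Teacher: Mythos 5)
The paper offers no proof of this statement: Theorem \ref{2.10} is imported verbatim from Vinberg (Theorem $8$ of \cite{VIN}), just as Theorem \ref{2.9} is, so there is nothing internal to compare your argument against. Judged on its own terms, your reduction is the right one: since $\Ls$ is semisimple, every submodule of a prehomogeneous module is a quotient and hence prehomogeneous, so each irreducible summand must occur in Theorem \ref{2.9} and the total dimension is at most $\dim(\Ls)-1$ (Corollary \ref{2.6} gives the $-1$, which you drop but do not need). This leaves a finite list of candidates per type, your cyclic-vector computation for $mL(\om_1)$ over $\mathfrak{sl}_{\ell+1}$ and your symmetric-matrix obstruction for $mL(\om_1)$ over $\mathfrak{sp}_{2\ell}$ are both correct, and the overall architecture would reconstruct Vinberg's theorem.

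Two points in the exclusion step are genuinely wrong or insufficient, and they sit exactly where the content of the theorem lies. First, a parity error: $L(\om_2)=\Lambda^2\C^n$ is prehomogeneous for $\mathfrak{sl}_n$ when $n$ is \emph{odd} (type $A_{2\ell}$ means $n=2\ell+1$), not even; for $n$ even the Pfaffian is a nonconstant $\mathrm{SL}_n$-invariant. Your later phrase ``alternating forms in odd dimension'' shows you know this, but the candidate list generated from the wrong parity would be incorrect. Second, the obstruction you propose for the mixed type-$A$ candidates does not do the job: an equivariant map $\C^n\otimes\Lambda^2\C^n\to\Lambda^3\C^n$ only pushes a dense orbit to a dense orbit in the closure of its image, which yields no contradiction without further work, and it says nothing for small $n$. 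The clean obstruction excluding $L(\om_1)\oplus L(\om_2)$ for all $n=2\ell+1$ is the nonconstant invariant $(v,\eta)\mapsto v\wedge\eta^{\wedge\ell}\in\Lambda^{n}\C^{n}\cong\C$; this is essential, since it is precisely what separates the non-prehomogeneous $V\oplus\Lambda^2V$ from the listed $L(\om_1)\oplus L(\om_{2\ell-1})\cong V\oplus\Lambda^2V^{*}$, which you correctly verify to be prehomogeneous. Moreover your candidate list is not exhausted by the modules you name: for instance $L(\om_2)\oplus 2L(\om_{2\ell})$ passes the dimension test for $\ell\ge 2$, contains no already-excluded submodule, and must be ruled out separately (the contraction $\langle\eta,\phi_1\wedge\phi_2\rangle$ is a nonconstant invariant). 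With these repairs, and granting the $D_5$ isotropy analysis (where $\mathrm{Spin}_7\ltimes\C^8$ is the stabiliser of a \emph{generic} spinor, not a pure one), the sketch is viable; as written, the hard cases are deferred back to \cite{VIN}.
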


Note that in particular only $V=0$ is a prehomogeneous $\Ls$-module,  where $\Ls_1$ is
a simple Levi subalgebra of $\Lg$ of type $B_{\ell}$ for $\ell\ge 3$, or of type $D_{\ell}$ for $d=4$ and $d\ge 6$,
or of exceptional type. Then we just have the decomposition $\Lg=\Ls_1=\Ls_1+\Ls_1$. \\[0.2cm]
So far we only have considered the $\Ls$-module $V$ to be an abelian Lie subalgebra when writing $\Lg=\Ls\ltimes V$.
However, we can also use Vinberg's classification when the radical is possibly not abelian.
With $\Lg=\Ls\ltimes \nil(\Lg)$ being a Levi decomposition of a disemisimple Lie algebra, $\nil(\Lg)$ a priori need not be abelian
as a Lie algebra. If $\Ls$ is simple, we can again view the vector space $V$ of $\nil(\Lg)$ as an $\Ls$-module and
decide whether or not it is prehomogeneous by Vinberg's classification. We state this explicitly as follows.

\begin{prop}\label{2.11}
A Lie algebra $\Lg$ with simple Levi subalgebra $\Ls$ is disemisimple if and only if it is isomorphic to $\Ls\ltimes \nil(\Lg)$,
where $\nil(\Lg)$ as an $\Ls$-module is one of the prehomogeneous modules of Vinberg's classification.  
\end{prop}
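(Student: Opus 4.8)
The plan is to derive Proposition \ref{2.11} as a direct specialization of Theorem \ref{2.7} together with Vinberg's classification (Theorems \ref{2.9} and \ref{2.10}). First I would observe that since $\Ls$ is simple, the Levi subalgebra of $\Lg$ is simple; in particular $\Ls$ is semisimple, so Theorem \ref{2.7} applies verbatim. The theorem tells us that $\Lg$ is disemisimple if and only if $\rad(\Lg)=\nil(\Lg)$ and $\nil(\Lg)$, regarded as an $\Ls$-module via the adjoint action, is prehomogeneous.

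Next I would handle the two directions. For the "if" direction, suppose $\Lg\cong\Ls\ltimes\nil(\Lg)$ where the underlying $\Ls$-module of $\nil(\Lg)$ appears in Vinberg's list. Every module in that list is by definition prehomogeneous, and a semidirect-product presentation $\Lg=\Ls\ltimes\nil(\Lg)$ forces $\rad(\Lg)=\nil(\Lg)$ (the nilradical of such a semidirect product is exactly the nilpotent ideal one is extending by, because $\Ls$ acts with no trivial summand on a prehomogeneous module when the module is nonzero, and the zero case is trivial). Hence Theorem \ref{2.7} yields that $\Lg$ is disemisimple. For the "only if" direction, assume $\Lg$ is disemisimple with simple Levi subalgebra $\Ls$. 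By Lemma \ref{2.3}, $\Lg$ is perfect and $\rad(\Lg)=\nil(\Lg)$, so a Levi decomposition $\Lg=\Ls\dotplus\nil(\Lg)$ is precisely a semidirect-product presentation $\Lg\cong\Ls\ltimes\nil(\Lg)$. By Theorem \ref{2.7}, the $\Ls$-module $\nil(\Lg)$ is prehomogeneous, and since $\Ls$ is simple, the classification of prehomogeneous modules for complex simple Lie algebras — Vinberg's Theorems \ref{2.9} (irreducible case) and \ref{2.10} (reducible case), together with the trivial module $0$ — shows that $\nil(\Lg)$ is one of the modules in Vinberg's classification.

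The point worth emphasizing, and the only genuinely nontrivial input beyond bookkeeping, is that Vinberg's classification is a statement about the \emph{$\Ls$-module structure} only: it does not see the Lie bracket on $\nil(\Lg)$, so even though $\nil(\Lg)$ need not be abelian as a Lie algebra, its underlying module is still constrained to Vinberg's list. This is exactly the remark made in the paragraph preceding the proposition, and it is what lets us state the result without any abelianness hypothesis on the radical. I would therefore phrase the proof so as to make this decoupling explicit: prehomogeneity of $\nil(\Lg)$ depends only on the representation $\ad|_{\Ls}\colon\Ls\to\mathfrak{gl}(\nil(\Lg))$, and Vinberg classifies all such representations admitting an open orbit when $\Ls$ is simple.

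In summary, the proof is short: combine Lemma \ref{2.3} (perfectness and $\rad=\nil$, which makes the Levi decomposition a semidirect product), Theorem \ref{2.7} (the equivalence between disemisimplicity and prehomogeneity of $\nil(\Lg)$ as an $\Ls$-module), and Vinberg's Theorems \ref{2.9} and \ref{2.10} (the list of prehomogeneous modules for simple $\Ls$, including $V=0$). I do not expect any real obstacle; the main thing to get right is the careful statement that the classification applies to the module structure alone, independently of whether $\nil(\Lg)$ is abelian.
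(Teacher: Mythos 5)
Your proposal is correct and follows exactly the route the paper intends: the proposition is stated there without a separate proof, being presented (in the preceding paragraph) as the immediate combination of Theorem \ref{2.7} with Vinberg's classification for simple $\Ls$, which is precisely your argument. Your emphasis that prehomogeneity depends only on the $\Ls$-module structure of $\nil(\Lg)$ and not on its Lie bracket is the same key remark the paper makes; the only cosmetic quibble is that in the ``if'' direction the equality $\rad(\Lg)=\nil(\Lg)$ already follows from $\nil(\Lg)$ being a nilpotent ideal with semisimple quotient, so your parenthetical about trivial summands is unnecessary.
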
  

This description is also helpful for deciding whether or not a given low-dimensional Lie algebra is disemisimple.
We have the following result.

\begin{cor}
The only complex disemisimple Lie algebras of dimension $n\le 6$ are  $\mathfrak{sl}_2(\C)$,  $\mathfrak{sl}_2(\C)\ltimes V(2)$
and $\mathfrak{sl}_2(\C) \oplus \mathfrak{sl}_2(\C)$.
\end{cor}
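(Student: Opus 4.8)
The plan is to run a dimension count against the classification results already available. By Lemma~\ref{2.3} a disemisimple Lie algebra $\Lg$ is perfect, so a Levi subalgebra $\Ls$ of $\Lg$ must be nonzero; in particular $\dim(\Ls)\ge 3$. Thus if $n=\dim(\Lg)\le 6$, writing $\Lg=\Ls\ltimes\nil(\Lg)$ we have $3\le\dim(\Ls)\le 6$ and $0\le\dim(\nil(\Lg))\le 3$. First I would enumerate the possible semisimple $\Ls$ of dimension at most $6$: the only options are $\Ls=\mathfrak{sl}_2(\C)$ ($\dim 3$), and there is no semisimple Lie algebra of dimension $4$, $5$, or $6$ (the next one is $\mathfrak{sl}_2(\C)\oplus\mathfrak{sl}_2(\C)$ of dimension $6$). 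So either $\Lg$ is itself semisimple --- giving $\mathfrak{sl}_2(\C)$ ($n=3$) or $\mathfrak{sl}_2(\C)\oplus\mathfrak{sl}_2(\C)$ ($n=6$), with $\nil(\Lg)=0$ --- or $\Ls=\mathfrak{sl}_2(\C)$ with $1\le\dim(\nil(\Lg))\le 3$.

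In the remaining case $\Ls=\mathfrak{sl}_2(\C)=A_1$, Theorem~\ref{2.7} (or Proposition~\ref{2.11}) says $\nil(\Lg)$, regarded as an $\Ls$-module, must be prehomogeneous. I would invoke Corollary~\ref{2.6}: a nonzero prehomogeneous $\Ls$-module $V$ satisfies $2\le\dim(V)\le\dim(\Ls)-1=2$, so $\dim(\nil(\Lg))=2$ and $\nil(\Lg)\cong V(2)$, the unique irreducible $2$-dimensional $A_1$-module (indeed this appears in Vinberg's list, Theorem~\ref{2.9}, row $A_\ell$ with $\ell=1$). This already forces $\dim(\Lg)=3+2=5$ and pins down $\Lg=\mathfrak{sl}_2(\C)\ltimes V(2)$ as the unique possibility, \emph{provided} one checks that the nilradical $\nil(\Lg)$ is abelian --- which here is automatic, since a $2$-dimensional nilpotent Lie algebra carrying a nontrivial irreducible $\mathfrak{sl}_2(\C)$-action by derivations has a characteristic (hence $\Ls$-invariant) one-dimensional commutator subalgebra, contradicting irreducibility unless it is abelian.

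Finally I would observe that all three algebras listed are indeed disemisimple: $\mathfrak{sl}_2(\C)$ and $\mathfrak{sl}_2(\C)\oplus\mathfrak{sl}_2(\C)$ are semisimple (take $\Ls_1=\Ls_2=\Lg$), and $\mathfrak{sl}_2(\C)\ltimes V(2)$ is disemisimple by Theorem~\ref{2.7} since $V(2)$ is a prehomogeneous $\mathfrak{sl}_2(\C)$-module; this also recovers Example~\ref{2.2} with $n=2$. Assembling the two directions completes the proof. The only genuinely delicate point is the structural fact that no semisimple Lie algebra has dimension $4$, $5$, or $6$ --- this I would justify by noting that every semisimple Lie algebra is a direct sum of simple ideals, the smallest of which is $\mathfrak{sl}_2(\C)$ of dimension $3$, and the only simple Lie algebra of dimension $\le 6$ is $\mathfrak{sl}_2(\C)$ itself (the next smallest, $\mathfrak{sl}_3(\C)$, has dimension $8$), so the possible dimensions of semisimple Lie algebras below $7$ are exactly $0$, $3$, and $6$.
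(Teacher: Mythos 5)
Your proof is correct, but it takes a genuinely different route from the paper's. The paper first invokes the classification of perfect Lie algebras of dimension $\le 6$ to reduce to the three candidates $\mathfrak{sl}_2(\C)\ltimes V(2)$, $\mathfrak{sl}_2(\C)\ltimes V(3)$ and $\mathfrak{sl}_2(\C)\ltimes \Ln_3(\C)$, and then uses Proposition~\ref{2.11} together with Vinberg's list (Theorem~\ref{2.9}) to discard the two candidates with $3$-dimensional radical. You instead avoid both of these inputs: you enumerate the semisimple Lie algebras of dimension $\le 6$ directly (only $\mathfrak{sl}_2(\C)$ and $\mathfrak{sl}_2(\C)\oplus\mathfrak{sl}_2(\C)$), and in the non-semisimple case you cap the radical by Corollary~\ref{2.6}, which forces $\dim\nil(\Lg)=2$ since $\dim(\Ls)-1=2$ for $\Ls=\mathfrak{sl}_2(\C)$; a $2$-dimensional nilpotent Lie algebra is automatically abelian (your characteristic-ideal argument is more than is needed here), and a $2$-dimensional module is prehomogeneous only if it is $V(2)$. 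Each approach buys something: the paper's is immediate once the low-dimensional classification of perfect Lie algebras is granted, whereas yours is self-contained, needing only the elementary bound of Corollary~\ref{2.6} rather than either classification. One point worth making explicit in your write-up is that Corollary~\ref{2.6} applies to $\nil(\Lg)$ viewed merely as an $\Ls$-module (its possible internal bracket is irrelevant to the dimension bound), which is exactly how Theorem~\ref{2.7} delivers the prehomogeneity hypothesis; with that said, your argument is complete.
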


\begin{proof}
Assume that $\Lg$ is disemisimple of dimension $n\le 6$ and not semisimple.  Then $\Lg$ is perfect by Lemma $\ref{2.3}$
and $\rad(\Lg)$ is nilpotent. Hence by the classification of perfect Lie algebras in low dimension, $\Lg$ is isomorphic to
one of the following Lie algebras:
\[
\mathfrak{sl}_2(\C) \ltimes V(2),\, \mathfrak{sl}_2(\C)\ltimes V(3),\, \mathfrak{sl}_2(\C)\ltimes \Ln_3(\C),
\]  
where $\Ln_3(\C)$ denotes the $3$-dimensional Heisenberg Lie algebra. We can apply Proposition $\ref{2.11}$, because
$\mathfrak{sl}_2(\C)$ is simple and there is no prehomogeneous  $\mathfrak{sl}_2(\C)$-module $V$ of dimension $3$ by
Vinberg's classification. On the other hand, the natural module $V(2)$ for $\mathfrak{sl}_2(\C)$ is prehomogeneous.
This yields the decomposition $\mathfrak{sl}_2(\C)\ltimes V(2)=\mathfrak{sl}_2(\C)+ \mathfrak{sl}_2(\C)$ from Example $\ref{2.2}$.
So we are done.
\end{proof}

The situation becomes much more complicated for the case that the Levi subalgebra $\Ls$ of $\Lg$ is only semisimple and
not simple. The classification of prehomogeneous $\Ls$-modules by Sato and Kimura here is not of such an
explicit nature. They consider triples $(\Ls,\rho, V(n))$ of semisimple Lie algebras $\Ls$ with a prehomogeneous representation
$\rho$, where the parameter $n$ in $V(n)$ denotes the dimension of the module. The classification of such triples then
only is up to {\em strong equivalence}, see Definition $4$ on page $36$ in \cite{SAK},  and {\em castling equivalence}, see
$10$ on page $39$ in \cite{SAK}. 
However in some cases we still can apply the classification list explicitly. We state Proposition $1$ of section $6$ in \cite{SAK}
in the language of Lie algebras as follows.

\begin{prop}[Sato, Kimura]\label{2.13}
Let $(\Ls,\rho, \dim(\rho))$ be a prehomogeneous triple, where $\Ls$ is a semisimple Lie algebra and $\rho$ is irreducible.
Then it is strongly equivalent to one of the following castling-reduced prehomogeneous triples.
\vspace*{0.5cm}
\begin{center}
\begin{tabular}{c|cccc}
$\Ls$ & $\dim(\Ls)$ & $\rho$ & $\dim(\rho)$ & conditions \\[4pt]
\hline
$\Ls_1\oplus A_m$ & $s+m(m+2)$  & $L(\la)\otimes L(\om_1)$ & $(n+1)(m+1)$ & $m>n>2$  \\[4pt]
$A_n \oplus A_m$  & $n(n+2)+m(m+2)$  & $L(\om_1)\otimes L(\om_1)$ & $(n+1)(m+1)$ & $m\ge 2n+1\ge 1$ \\[4pt]
$A_{2m}$  & $4m(m+1)$  & $L(\om_2)$ & $m(2m+1)$ & $m\ge 1$ \\[4pt]  
$A_1\oplus A_{2m}$ & $4m(m+1)+3$  & $L(\om_1)\otimes L(\om_2)$  & $2\cdot m(2m+1)$ & $m\ge 1$   \\[4pt]
$C_n \oplus A_m$  & $n(2n+1)+4m(m+1)$  & $L(\om_1)\otimes L(\om_1)$ & $(2n)(2m+1)$ & $n\ge m+1\ge 1$ \\[4pt]
$D_5$  & $45$  & $L(\om_4)$ & $16$ & $-$ 
\end{tabular}
\end{center}
\vspace*{0.5cm}
Here $\Ls_1$ is a semisimple Lie algebra of dimension $s$, different from $A_n$, and $L(\la)$ is any simple
$\Ls_1$-module of dimension $n+1$ for $n\ge 2$.
\end{prop}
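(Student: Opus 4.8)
The plan is to deduce this from the classification of irreducible prehomogeneous vector spaces for reductive algebraic groups due to Sato and Kimura, read through a dictionary with Definition $\ref{2.4}$; strictly speaking there is nothing new to prove, since the assertion is Proposition $1$ of section $6$ in \cite{SAK} translated into the language of semisimple Lie algebras.

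First I would make the dictionary precise. If $G$ is a connected algebraic group with Lie algebra $\Lg$ acting on a vector space $V$, then for $v\in V$ the tangent space of the orbit $G\cdot v$ at $v$ is $\Lg\cdot v\subseteq V$; hence $G\cdot v$ is Zariski-open in $V$ if and only if the evaluation map $e_v\colon\Lg\ra V$ is surjective, i.e.\ $\Lg\cdot v=V$. So ``$V$ has an open $G$-orbit'' and ``$V$ is a prehomogeneous $\Lg$-module'' are the same condition, and when $G$ is the connected semisimple group with Lie algebra $\Ls$ this recovers exactly the prehomogeneous triples $(\Ls,\rho,\dim(\rho))$ of the statement, with $\rho$ the differential of the corresponding representation of $G$. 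The only subtlety is that the groups in the Sato and Kimura tables are \emph{reductive}, typically of the form $G_0\times(GL_1)^k$ with $G_0$ semisimple; since $\rho$ is irreducible the central torus acts by scalars, so it contributes only the line $\C v$ to $\Lg\cdot v$. Consequently $V$ is prehomogeneous for the semisimple part $[\Lg,\Lg]$ precisely when already $[\Lg,\Lg]\cdot v=V$, which by a dimension count forces the torus factor to be redundant. Keeping from the Sato and Kimura list exactly the entries with this property produces the six families in the table; the parameter inequalities ($m>n>2$; $m\ge 2n+1$; $n\ge m+1$; $m\ge 1$) then record the conditions for the pair of $A$-type ranks to be \emph{castling-reduced}, together with the requirement that each simple factor be genuinely simple (so the relevant rank is $\ge 1$, resp.\ $\ge 2$ in the tensor-product rows, where a rank-one factor would either be absorbed into the torus or make the module reducible), and the side condition ``$\Ls_1$ different from $A_n$'' merely prevents counting an entry twice.

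The substance of the argument, and the main obstacle, is the bookkeeping. One must (i) go through the entries of the Sato and Kimura list one by one and decide for each whether its semisimple part already acts with a Zariski-open orbit --- equivalently, whether deleting every $GL_1$ factor lowers the generic orbit dimension, which can be read off from the relative invariants and the generic isotropy subalgebras recorded in \cite{SAK}; (ii) normalize each surviving triple to its castling-reduced representative, tracking how $\dim(\Ls)$ and $\dim(\rho)$ transform under a castling transformation; and (iii) verify the numerical entries of each row, in particular that $\dim(\rho)\le\dim(\Ls)$ as forced by Corollary $\ref{2.6}$, and that the dimension formulas (for instance $4m(m+1)$ for $A_{2m}$ on $L(\om_2)$, or $2\cdot m(2m+1)$ for $A_1\oplus A_{2m}$) come out consistently. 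Finally I would recall the precise definitions of \emph{strong equivalence} (Definition $4$ on page $36$ in \cite{SAK}) and of \emph{castling equivalence} ($10$ on page $39$ in \cite{SAK}), so that the statement is literally the image of Proposition $1$ of section $6$ in \cite{SAK} under the dictionary above; with these identifications in hand the proof is complete.
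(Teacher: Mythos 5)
The paper offers no proof of this proposition at all: it is stated verbatim as a translation of Proposition 1 of Section 6 of \cite{SAK} into Lie-algebra language, exactly as you say. Your proposal takes the same route (cite Sato--Kimura and supply the orbit-tangent-space dictionary $T_v(G\cdot v)=\Lg\cdot v$, which the paper itself invokes just before Definition \ref{2.4}), merely spelling out the bookkeeping in more detail than the authors do, so it is consistent with the paper's treatment.
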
  

We can apply this result for some examples in low dimension.

\begin{ex}
Let $\Ls=\mathfrak{sl}_2(\C)\oplus \mathfrak{sl}_2(\C)$ and $V(2)=L(\om_1)$ be the natural $\mathfrak{sl}_2(\C)$-module.
Then the tensor module $V(2)\otimes V(2)$ for $\Ls$ is not prehomogeneous. However, $\Ls$ actually has
prehomogeneous $\Ls$-modules, for example $V=( V(1)\otimes V(2)) \oplus \left( V(2)\otimes V(1)\right)$.
\end{ex}

This can be verified directly. The first part also follows from the above classification. The module is $4$-dimensional
and irreducible. This does not change under strong equivalence. Also there is no castling equivalent triple with smaller
dimension for $\Ls$. Since the corresponding triple is not listed, the module is not prehomogeneous.  

\begin{ex}\label{2.15}
Let $\Ls=\mathfrak{sl}_2(\C)\oplus \mathfrak{sl}_3(\C)$ and $V=L(\om_1)\otimes L(\om_2)$ be the irreducible tensor module
of dimension $2\cdot 3=6$. Then $V$ is prehomogeneous and the Lie algebra $\Lg=\Ls\ltimes V$ of dimension $11+6=17$ is disemisimple
with $\Lg=\Ls+\phi(\Ls)$ for some automorphism $\phi$ of $\Ls$.
\end{ex}  

Indeed, the triple $(A_1\oplus A_2, L(\om_2)\otimes L(\om_1),6)$ is in the above list of Sato and Kimura for $m=1$.
Since $L(\om_2)^*\cong L(\om_1)$, and the
triples $(\Ls,\rho,\dim (\rho))$, $(\Ls,\rho^*,\dim (\rho))$ are strongly equivalent, also the $\Ls$-module
$V=L(\om_1)\otimes L(\om_1)$ is prehomogeneous. \\[0.2cm]
Not all examples can be found so easily in the list of Sato and Kimura because of castling and strong equivalence.
Moreover this list is restricted to irreducible modules. Hence it is interesting to show that, for an explicitly given
semisimple Lie algebra $\Ls$ and an $\Ls$-module $V$, there is a constructive algorithm to determine whether or not
$V$ is prehomogeneous. Here we will not give the algorithm in general, but rather illustrate it with Example $\ref{2.15}$.
This requires already some work. \\[0.2cm]
So let $V_2=L(\om_1)$ and $V_3=L(\om_2)$, with $V_2$ the natural $2$-dimensional $\mathfrak{sl}_2(\C)$-module,
and $V_3$ the natural $3$-dimensional $\mathfrak{sl}_3(\C)$-module. We want to show that the irreducible
$6$-dimensional $\Ls$-module $V=V_2\otimes V_3$ is prehomogeneous. In fact, we even explicitly construct a vector $v\in V$
such that $\Ls\cdot v=V$. First we parametrize the representation $\phi\colon \mathfrak{sl}_2(\C) \ra \End(V_2)$, $x\mapsto \phi(x)$
with respect to a basis $(e_1,e_2)$ of $V_2$ by
\[
\phi(x)=\begin{pmatrix} x_{11} & x_{12} \cr  x_{21} & x_{22} \end{pmatrix},
\]
where the entries are complex numbers with $x_{11}+x_{22}=0$. Similarly, we parametrize the representation
$\psi\colon \mathfrak{sl}_3(\C) \ra \End(V_3)$, $y\mapsto \psi(y)$
with respect to a basis $(f_1,f_2,f_3)$ of $V_3$ by
\[
\psi(y)=\begin{pmatrix} y_{11} & y_{12} & y_{13} \cr  y_{21} & y_{22} & y_{23} \cr y_{31} & y_{32} & y_{33} \end{pmatrix},
\]
where the entries are complex numbers with $y_{11}+y_{22}+y_{33}=0$. Now the action of $x+y$ of
$\Ls=\mathfrak{sl}_2(\C)\oplus \mathfrak{sl}_3(\C)$ on the vector space $V_2\otimes V_3$ with respect to the basis
\[
(e_1\otimes f_1,\, e_1\otimes f_2,\, e_1\otimes f_3,\, e_2\otimes f_1,\, e_2\otimes f_2,\, e_2\otimes f_3)
\]
is given by the Kronecker formula $\rho\colon \Ls\ra \End (V_2\otimes V_3)$, 
$(x+y)\mapsto \phi(x)\otimes \id_2+\id_3\otimes \psi(y)$,
where $\id_2,\id_3$ denote the identity matrices of size $2$ and $3$ respectively. Then we obtain the following explicit
parametrization
\[
\rho(x+y)= \begin{pmatrix} x_{11} & 0 & 0 & x_{12} & 0 & 0 \cr 0 & x_{11} & 0 & 0 & x_{12} & 0 \cr 0 & 0 & x_{11} & 0 & 0 & x_{12} \cr
 x_{21} & 0 & 0 & x_{22} & 0 & 0 \cr 0 & x_{21} & 0 & 0 & x_{22} & 0 \cr 0 & 0 & x_{21} & 0 & 0 & x_{22} \end{pmatrix}+
\begin{pmatrix} y_{11} & y_{12} & y_{13} & 0 & 0 & 0 \cr y_{21} & y_{22} & y_{23} & 0 & 0 & 0 \cr y_{31} & y_{32} & y_{33} & 0 & 0 & 0 \cr
 0 & 0 & 0 & y_{11} & y_{12} & y_{13} \cr 0 & 0 & 0 & y_{21} & y_{22} & y_{23} \cr 0 & 0 & 0 & y_{31} & y_{32} & y_{33} \end{pmatrix}.
\]  
Now let
\[
v:=\sum_{i,j}v_{ij}(e_i\otimes f_j) \in V_2\otimes V_3.
\]
We can identify this as $v=(v_{11},v_{12},v_{13},v_{21},v_{22},v_{23})$. 
Then $V$ is prehomogeneous if and only if $\{\rho(x+y)(v)\mid x+y\in \Ls\}=V$. This condition can be reformulated
by moving the $v_{ij}$ to the left side and the $x_{ij},y_{ij}$ to the right side as follows:
$V$ is prehomogeneous if and only if
\[
\begin{pmatrix}
v_{11} & v_{21} & 0 & v_{11} & v_{12} & v_{13} & 0 & 0 & 0 & 0 & 0 \cr
v_{12} & v_{22} & 0 & 0 & 0 & 0 & v_{11} & v_{12} & v_{13}  & 0 & 0 \cr
v_{13} & v_{23} & 0 & -v_{13} & 0 & 0 & 0 & -v_{13} & 0 & v_{11} & v_{12} \cr
-v_{21} & 0 & v_{11} & v_{21} & v_{22} & v_{23} & 0 & 0 & 0 & 0 & 0 \cr
-v_{12} & 0 & v_{12} & 0 & 0 & 0 & v_{21} & v_{22} & v_{23} & 0 & 0 \cr
-v_{23} & 0 & v_{13} & -v_{23} & 0 & 0 & 0 & -v_{23} & 0 & v_{21} & v_{22} 
\end{pmatrix}    
\cdot
\begin{pmatrix} x_{11} \cr x_{12} \cr x_{21} \cr y_{11} \cr y_{12} \cr y_{13} \cr y_{21} \cr y_{22} \cr y_{23} \cr
y_{31} \cr y_{32} \end{pmatrix}
\]  
is all of $\C^6$ when $x_{ij},y_{ij}$ run through $\C$. Here we have replaced $x_{22}$ and $y_{33}$ by $x_{22}=-x_{11}$ and
$y_{33}=-y_{11}-y_{22}$. Denoting the $6\times 11$-matrix by $M_v$ we have that $V$ is prehomogeneous if and only if the rank
of $M_v$ equals $6$ for some $v$. It is easy to see that we can find such vectors $v$, for example $v=(0,0,1,0,1,0)$.

\section{The solvable radical of disemisimple Lie algebras}

We have seen that the solvable radical of a disemisimple Lie algebra $\Lg$ in low dimension is in fact {\em abelian}. We ask
ourselves whether or not this is true in general for all disemisimple Lie algebras. In this section we will prove this for the
case that $\Lg$ has a {\em simple} Levi subalgebra. Here we can use Vinberg's classification. However, this proof cannot be
generalized to the semisimple case this way, because prehomogeneous modules for semisimple Lie algebras do not admit a classification
of such an explicit nature as Vinberg's classification. 

\begin{defi}\label{3.1}
Let $\Ls$ be a semisimple Lie algebra and $V$ be an $\Ls$-module. \\[0.1cm]
We say that $V$ is {\em of type $1$}, if $V=A\oplus B$ is the sum of two irreducible non-trivial $\Ls$-submodules $A$ and $B$
of $V$ such that $B$ can be embedded as an $\Ls$-submodule of the exterior product $\Lambda^2(A)=A\wedge A$. \\[0.1cm]
We say that $V$ is {\em of type $2$}, if $V=A\oplus B\oplus C$ is the sum of three irreducible non-trivial $\Ls$-submodules
$A,B,C$ of $V$ such that $C$ can be embedded as an $\Ls$-submodule of the tensor product $A\otimes B$. 
\end{defi}  

We have the following result, which reduces our question on the solvable radical of a di\-semisimple Lie algebra to
the existence of prehomogeneous modules of type $1$ or $2$.

\begin{thm}\label{3.2}
Let $\Ls$ be a semisimple Lie algebra. Then there exists a disemisimple Lie algebra $\Lg$ with a non-abelian nilradical
$\nil(\Lg)$ and Levi subalgebra $\Ls$ if and only if there is a prehomogeneous $\Ls$-module of type $1$ or $2$.   
\end{thm}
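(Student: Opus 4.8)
The plan is to reduce everything to Theorem~\ref{2.7} via two elementary facts about prehomogeneous modules over a semisimple Lie algebra $\Ls$, which I would record first: (a) a surjection $\pi\colon V\twoheadrightarrow W$ of $\Ls$-modules sends a generating vector to a generating vector, so every quotient $\Ls$-module of a prehomogeneous module is again prehomogeneous; and (b) a prehomogeneous $\Ls$-module has no nonzero trivial summand, since for a generating vector $v=v_0+v'$ with $v_0$ in the trivial isotypic component one has $\Ls\cdot v=\Ls\cdot v'\subseteq V'$. Both are used throughout.

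For the implication "$\Leftarrow$", given a prehomogeneous $\Ls$-module $V$ of type $1$, say $V=A\oplus B$, I would fix an $\Ls$-equivariant surjection $p\colon\Lambda^2 A\to B$ (which exists because $B$ is a summand of $\Lambda^2 A$ by complete reducibility) and equip $V$ with the $2$-step nilpotent bracket $[a_1,a_2]=p(a_1\wedge a_2)$ for $a_i\in A$ and $[A,B]=[B,B]=0$. The Jacobi identity is automatic since all length-two products land in the central summand $B$, and $\Ls$ acts by derivations precisely because $p$ is equivariant; hence $\Lg:=\Ls\ltimes V$ is a Lie algebra with $\rad(\Lg)=V=\nil(\Lg)$, the module $V$ is prehomogeneous, and the nilradical is nonabelian since $p$ is onto $B\neq 0$. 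So Theorem~\ref{2.7} gives that $\Lg$ is disemisimple with Levi subalgebra $\Ls$ and nonabelian nilradical. For $V$ of type $2$, $V=A\oplus B\oplus C$, I would argue identically with a surjection $q\colon A\otimes B\to C$ and $[a,b]=q(a\otimes b)$ for $a\in A$, $b\in B$, all other brackets zero.

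For the implication "$\Rightarrow$", suppose $\Lg$ is disemisimple with nonabelian nilradical $\Ln$ and Levi subalgebra $\Ls$; by Theorem~\ref{2.7}, $\Ln$ is a prehomogeneous $\Ls$-module. Since $\Ln$ is nilpotent and nonabelian, $\Ln_2:=[\Ln,\Ln]\neq 0$ and $\Ln_3:=[\Ln,\Ln_2]\neq\Ln_2$, so $M:=\Ln/\Ln_3$ is a $2$-step nilpotent, nonabelian Lie algebra, and as a quotient $\Ls$-module of $\Ln$ it is prehomogeneous. As an $\Ls$-module $M=\ov\Ln\oplus W$ with $\ov\Ln=\Ln/\Ln_2$ and $W=\Ln_2/\Ln_3$ a nonzero central submodule, and the bracket of $M$ descends to an $\Ls$-equivariant surjection $\Lambda^2\ov\Ln\twoheadrightarrow W$; by fact (b), $\ov\Ln=\bigoplus_i A_i$ and $W=\bigoplus_j C_j$ with all $A_i,C_j$ nontrivial irreducible. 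I would then fix one summand $C:=C_1$ of $W$: since $C$ is a summand of a quotient of $\Lambda^2\ov\Ln=\bigoplus_i\Lambda^2 A_i\oplus\bigoplus_{i<i'}A_i\otimes A_{i'}$, it is (by complete reducibility and irreducibility) a summand of one of these pieces. If $C\hookrightarrow\Lambda^2 A_i$, then $V:=A_i\oplus C$ is a quotient $\Ls$-module of $M$, hence prehomogeneous by (a), and of type $1$; if $C\hookrightarrow A_i\otimes A_{i'}$ with $i\neq i'$, then $V:=A_i\oplus A_{i'}\oplus C$ is a quotient $\Ls$-module of $M$, hence prehomogeneous, and of type $2$. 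Either way the required module is found.

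I expect the main obstacle to be the bookkeeping in the "$\Rightarrow$" direction rather than any single hard step: each module to which prehomogeneity is transferred must be an honest quotient of $\Ln$, which is why one passes to $M=\Ln/\Ln_3$ so that $W$ becomes a summand of a quotient module rather than merely a subideal of $\Ln_2$; and trivial summands must be excluded, which is precisely fact (b). The Jacobi identity in the "$\Leftarrow$" direction is not an issue, the algebras constructed there being $2$-step nilpotent.
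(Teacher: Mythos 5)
Your proof is correct. The converse direction is essentially the paper's construction: you define the $2$-step nilpotent bracket on $V$ directly through an equivariant surjection $\Lambda^2A\to B$ (resp.\ $A\otimes B\to C$), where the paper obtains the same algebra as a quotient of the free class-two nilpotent Lie algebra on $A$ (resp.\ $A\oplus B$) by a Weyl complement $U$; both reduce to Theorem~\ref{2.7} in the same way. Your forward direction, however, takes a genuinely different route. The paper argues by minimality of dimension: it takes $\Lg$ of minimal dimension among disemisimple algebras with non-abelian nilradical $\Ln$ and Levi subalgebra $\Ls$, and invokes minimality repeatedly to force $[\Ln,\Ln]$ to be irreducible, $\Ln$ to have class exactly two, and the complement of $[\Ln,\Ln]$ to have at most two irreducible summands, so that the minimal $\Ln$ is itself of type $1$ or $2$. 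You instead start from an \emph{arbitrary} such $\Lg$, pass to the class-two quotient $M=\Ln/[\Ln,[\Ln,\Ln]]$ (prehomogeneous by your fact (a), non-abelian because the lower central series is strictly decreasing), locate one irreducible summand $C$ of $[M,M]$ inside a single constituent $\Lambda^2A_i$ or $A_i\otimes A_{i'}$ of $\Lambda^2\bigl(M/[M,M]\bigr)$, and project $M$ onto the corresponding two or three summands. This avoids the descent argument entirely and makes explicit the two facts (quotients of prehomogeneous modules are prehomogeneous; prehomogeneous modules have no trivial summand) that the paper uses more implicitly; the trade-off is that the paper's argument yields the stronger structural conclusion that a minimal example has its whole nilradical of type $1$ or $2$, while yours produces only \emph{some} prehomogeneous module of type $1$ or $2$ --- which is all the stated theorem and its later applications require.
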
  

\begin{proof}
Assume first that $\Lg$ is a disemisimple Lie algebra of minimal dimension having non-abelian nilradical and Levi subalgebra $\Ls$.
By Theorem $\ref{2.7}$ we have $\Lg\cong \Ls \ltimes \nil(\Lg)$ such that $\nil(\Lg)$ is a prehomogeneous $\Ls$-module.
Let us write $\Ln$ for $\nil(\Lg)$ in this proof. We will show that $\Ln$ is indeed of type $1$ or $2$. \\
Let $\La \subseteq [\Ln,\Ln]$ be a nonzero ideal of $\Lg$. Then the quotient Lie algebra $\Lg/\La$ has smaller dimension than
$\Lg$, but still is disemisimple with Levi decomposition $\Lg/\La\cong \Ls\ltimes (\Ln/\La )$. But since $\Lg$ was of minimal
dimension having a non-abelian ideal, we must have
$[\Ln/\La,\Ln/\La]=0$, which implies
\[
[\Ln,\Ln]\subseteq \La \subseteq [\Ln,\Ln]
\]
and hence equality. In particular, $\Lg$ has no proper nonzero ideal contained in $[\Ln,\Ln]$. Therefore
$\Ln$ has nilpotency class exactly two and $[\Ln,\Ln]$ is an irreducible $\Ls$-module.
By Weyl's theorem there exists an  $\Ls$-submodule $W$ of $\Ln$ such that $\Ln=W\oplus [\Ln,\Ln]$. Here
$W$ is nonzero, since otherwise $\Ln=0$, which is a contradiction.
Then $W$ is a minimal generating subspace of $\Ln$ as a Lie algebra with $[W,W]=[\Ln,\Ln]$, and it is
a direct sum of irreducible $\Ls$-modules. \\
We claim that the $\Ls$-module $W$ decomposes into {\em exactly one or two irreducible} $\Ls$-modules.
To see this, suppose that we have $W=W_1\oplus W_2\oplus W_3$ for three nonzero $\Ls$-modules. Then the Lie algebras
$\Ls \ltimes (W_i\oplus W_j\oplus [\Ln,\Ln])$ for $(i,j)=(1,2),(1,3),(2,3)$ are disemisimple
having Levi subalgebra $\Ls$. Since $\Lg$ is of minimal dimension, $[W_i\oplus W_j, W_i\oplus W_j]=0$ for these $(i,j)$.
But then also $[\Ln,\Ln]=[W,W]=0$. This is impossible, because $\Ln$ is not abelian. \\
Since by assumption $\Ln$ is a prehomogeneous $\Ls$-module, it cannot contain the trivial $1$-dimensional $\Ls$ module.
In particular, $W$ is a non-trivial $\Ls$-module. We can now distinguish the following cases. \\[0.2cm]
{\em Case 1:} $W$ is a non-trivial irreducible $\Ls$-module. Then $\Ln$ is prehomogeneous of type $1$ with
$A:=W$ and $B:=[\Ln,\Ln]$. Indeed, $\Ln=A\oplus B$ is the sum of two irreducible non-trivial $\Ls$-modules of $\Ln$ such
that $B$ can be embedded as a submodule of $A\wedge A$. To see the last claim,
note that the map $A\wedge A \ra [A,A]$ given by $a\wedge b \mapsto [a,b]$ is a surjective $\Ls$-module homomorphism, which
is well-defined by the universal property of the exterior product. An $\Ls$-module is a quotient of $A\wedge A$ if and only if it
embeds in $A\wedge A$ since $\Ls$ is semisimple. Hence $B=[\Ln,\Ln]=[A,A]$ can be embedded into the
$\Ls$-module $A\wedge A$ and we are done. \\[0.2cm]
{\em Case 2:} $W=A\oplus B$ is the sum of two non-trivial irreducible $\Ls$-modules. The subalgebra $\Ls\ltimes (A+[\Ln,\Ln])$
of $\Lg$ has Levi subalgebra $\Ls$ and nilradical $A+[\Ln,\Ln]$, which is a prehomogeneous $\Ls$-module. Hence this subalgebra
is disemisimple by Theorem $\ref{2.7}$. Again by the minimality of $\Lg$, the nilradical of $\Ls\ltimes (A+[\Ln,\Ln])$
has to be abelian. This implies $[A,A]=0$. In the same way we obtain $[B,B]=0$. Now let $C:=[\Ln,\Ln]$. 
Then we have
\[
C=[\Ln,\Ln]=[W,W]=[A\oplus B,A\oplus B]=[A,B].
\]
We have $\Ln=A\oplus B\oplus C$, where all three submodules are nontrivial and irreducible.
The map $A\otimes B \ra [A,B]$ is again a well-defined surjective $\Ls$-homomorphism. Hence
$C=[A,B]$ can be embedded into the $\Ls$-module $A\otimes B$. It follows that $\Ln$ is a prehomogeneous
module of type $2$. \\[0.2cm]
For the converse statement we also make a case distinction. \\[0.2cm]
{\em Case 1}: $V$ is a prehomogeneous $\Ls$-module of type $1$, i.e., we have $V=A\oplus B$ with non-trivial
irreducible $\Ls$-submodules
$A$ and $B$ of $V$. By assumption $B$ can be embedded as a submodule of $A\wedge A$. Let $\Lf:=A\oplus [A,A]$ be the free-nilpotent
Lie algebra of nilpotency class two on the generating space $A$. Then $\Lf$ is also naturally an $\Ls$-module.
The $\Ls$-submodule $[A,A]=[\Lf,\Lf]$ of $\Lf$ is
isomorphic to the $\Ls$-module $A\wedge A$. Therefore we may assume that $B$ is a submodule of $[A,A]$. By Weyl's theorem there
exists an $\Ls$-module $U$ such that $[A,A]=B\oplus U$. Then $U$ can be considered as an ideal of the Lie algebra $\Ls \ltimes \Lf$.
Now consider the quotient Lie algebra
\[
\Lg=(\Ls \ltimes \Lf)/U \cong \Ls \ltimes (\Lf/U).
\]
It has Levi subalgebra $\Ls$ and its solvable radical is given by
\[
\Lf/U\cong (A\oplus [A,A])/U \cong (A+U)/U \oplus [A,A]/U \cong A/(A\cap U)\oplus B \cong A\oplus B=V.
\]
We have $[\Lf/U,\Lf/U]\cong [\Lf,\Lf]/U\cong [A,A]/U \cong B$, so that the radical $\Lf/U$ is non-abelian and $2$-step nilpotent,
so nilpotent of class exactly two, and is prehomogeneous as an $\Ls$-module by assumption. Hence by Theorem $\ref{2.7}$ the Lie algebra
$\Lg\cong \Ls \ltimes V$ is disemisimple. \\[0.2cm]
{\em Case 2:}  $V$ is a prehomogeneous $\Ls$-module of type $2$, i.e., we have $V=A\oplus B \oplus C$ with non-trivial irreducible
$\Ls$-submodules $A,B,C$ of $V$. Let $\Lf:=(A\oplus B)\oplus [A\oplus B,A\oplus B]$ be the free-nilpotent Lie algebra of
nilpotency class two on the generating space $A\oplus B$. Then $\Lf$ is also naturally an $\Ls$-module and we have
$[A,A]\cong A\wedge A$, $[B,B]\cong B\wedge B$ and $[A,B]\cong A\otimes B$ as $\Ls$-submodules of $\Lf$. Hence we have
\[
[A\oplus B,A\oplus B]\cong [A,A]\oplus [A,B]\oplus [B,B]\cong (A\wedge A)\oplus (A\otimes B)\oplus (B\wedge B).
\]
By assumption $C$ can be embedded as a submodule of $A\otimes B$. We may therefore assume that $C$ is an $\Ls$-submodule of
$[A\oplus B,A\oplus B]$. By Weyl's theorem there exists an $\Ls$-submodule $U$ of $[A\oplus B,A\oplus B]$ such that
$[A\oplus B,A\oplus B]=C\oplus U$. This $U$ is an ideal of the Lie algebra $\Ls\ltimes \Lf$ and we consider the quotient
Lie algebra
\[
\Lg=(\Ls \ltimes \Lf)/U \cong \Ls \ltimes (\Lf/U).
\]  
We have  $[\Lf/U,\Lf/U]\cong [\Lf,\Lf]/U\cong C$, which is nonzero. Hence $\Lg$ has Levi subalgebra $\Ls$ and a solvable
radical of nilpotency class two. As an $\Ls$-module, $\Lf/U\cong A\oplus B\oplus C=V$, which is prehomogeneous by assumption.
Hence by Theorem $\ref{2.7}$ the Lie algebra $\Lg\cong \Ls \ltimes V$ is disemisimple. 
\end{proof}  

For simple Lie algebras we are able to decide whether or not there is a prehomogeneous module of type $1$ or $2$. Let us first
consider an easy example.

\begin{ex}
The Lie algebra $\mathfrak{sl}_2(\C)$ has no prehomogeneous module of type $1$ or $2$. 
\end{ex}  

Suppose that $\Ls=\mathfrak{sl}_2(\C)$ has a prehomogeneous $\Ls$-module $V$ of type $1$ or $2$. Then $V$ is 
the direct sum of two or three nonzero non-trivial irreducible submodules. By Corollary $2.6$ we have 
$2\le \dim(V) \le \dim(\Ls)-1=2$, which is impossible, because the two irreducible submodules have already 
dimension $n\ge 2$ each. Hence the claim follows. Of course, we also know by Vinberg's result given in 
Theorem $\ref{2.10}$ that a simple Lie algebra of type $A_1$ has no reducible prehomogeneous module.

\begin{lem}\label{3.4}
Let $\Ls$ be a simple Lie algebra of type $A_n$ with $n\ge 2$. Then we have the following isomorphisms for the tensor product
of highest weight modules:
\begin{align*}
L(\om_1)\otimes L(\om_1) & \cong L(\om_2) \oplus L(2\om_1),\\
L(\om_n) \otimes L(\om_n) & \cong L(\om_{n-1})\oplus L(2\om_n).
\end{align*}
\end{lem}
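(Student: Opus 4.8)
The plan is to prove the two tensor product decompositions for $\Ls$ of type $A_n$ by standard representation-theoretic arguments, using the Littlewood--Richardson rule (equivalently, the Pieri rule for multiplication by a fundamental weight module of the first type). The two statements are dual to each other under the outer automorphism of $A_n$ that sends $\om_i \mapsto \om_{n-i}$, so it suffices to establish the first isomorphism $L(\om_1)\otimes L(\om_1)\cong L(\om_2)\oplus L(2\om_1)$ and then apply this automorphism (or equivalently pass to dual modules, noting $L(\om_1)^*\cong L(\om_n)$) to obtain the second.

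For the first decomposition I would argue as follows. The module $L(\om_1)$ is the natural $(n+1)$-dimensional module $V=\C^{n+1}$ of $\mathfrak{sl}_{n+1}(\C)$. Its tensor square $V\otimes V$ splits as $\GL_{n+1}$-modules (hence also as $\mathfrak{sl}_{n+1}$-modules) into the symmetric and antisymmetric parts, $V\otimes V\cong S^2(V)\oplus \Lambda^2(V)$. Now $\Lambda^2(V)$ is precisely the fundamental representation with highest weight $\om_2$, so $\Lambda^2(V)\cong L(\om_2)$, of dimension $\binom{n+1}{2}$. On the other hand $S^2(V)$ has highest weight $2\om_1$ (the weight of $v_1\otimes v_1$ where $v_1$ is a highest weight vector of $V$), and it is irreducible: $S^2$ of the natural $\mathfrak{gl}$-module is always an irreducible $\mathfrak{sl}$-module since the Schur functor $S^\lambda$ applied to the defining representation yields an irreducible $\GL$-module with highest weight $\lambda$, which remains irreducible on restriction to $\mathfrak{sl}$ when $\lambda$ is a single row. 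Hence $S^2(V)\cong L(2\om_1)$, of dimension $\binom{n+2}{2}$. Adding dimensions, $\binom{n+1}{2}+\binom{n+2}{2}=(n+1)^2=\dim(V\otimes V)$, which confirms there are no further summands. This gives $L(\om_1)\otimes L(\om_1)\cong L(2\om_1)\oplus L(\om_2)$.

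For the second isomorphism, apply the outer automorphism $\tau$ of $\Ls$ interchanging $\om_i$ with $\om_{n-i}$; under $\tau$ the module $L(\om_1)$ pulls back to $L(\om_n)$, while $L(\om_2)$ becomes $L(\om_{n-1})$ and $L(2\om_1)$ becomes $L(2\om_n)$. Since $\tau$ is an algebra automorphism, it commutes with forming tensor products, so twisting the first isomorphism by $\tau$ yields $L(\om_n)\otimes L(\om_n)\cong L(\om_{n-1})\oplus L(2\om_n)$. (Alternatively, one notes $L(\om_n)\cong V^*$ and dualizes the first isomorphism, using that $L(2\om_1)^*\cong L(2\om_n)$ and $L(\om_2)^*\cong L(\om_{n-1})$.)

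I do not anticipate a genuine obstacle here; the statement is a textbook fact. The only mild point requiring care is the irreducibility of $S^2(V)$ as an $\mathfrak{sl}_{n+1}$-module (as opposed to a $\GL_{n+1}$-module), but this is immediate from the fact that $\GL_{n+1}$-irreducibles with highest weight in a single Young-diagram row stay irreducible when restricted to $\SL_{n+1}$, or can be checked directly by producing a highest weight vector and verifying it generates. One should also make sure the constraint $n\ge 2$ is used only to ensure $\om_2$ and $\om_{n-1}$ are genuine fundamental weights (for $n=1$ one would instead have $L(\om_1)\otimes L(\om_1)\cong L(2\om_1)\oplus L(0)$, the extra summand being the trivial module $\Lambda^2(\C^2)$), which is exactly why the hypothesis is stated.
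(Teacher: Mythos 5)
Your proof is correct and takes essentially the same approach as the paper: decompose $V\otimes V$ into its symmetric and exterior squares, identify $\Lambda^2(V)\cong L(\om_2)$ and ${\rm Sym}^2(V)\cong L(2\om_1)$ (the paper simply cites Exercise 15.32 of Fulton--Harris for this step), and deduce the second isomorphism by duality $L(\om_1)^*\cong L(\om_n)$, $L(\om_2)^*\cong L(\om_{n-1})$. The additional detail you give on the irreducibility of the symmetric square and the dimension count is a correct expansion of what the paper delegates to the citation.
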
  

\begin{proof}
For the natural $\Ls$-module $V=L(\om_1)$ we have $V\otimes V\cong \Lambda^2(V)\oplus {\rm Sym}^2(V)$. For the summands we have
$\Lambda^2(V)\cong L(\om_2)$ and $ {\rm Sym}^2(V)\cong L(2\om_1)$, see Exercise $15.32$ in \cite{FUH}. This shows the first
isomorphism. The second one follows from duality, because $L(\om_1)^*\cong L(\om_n)$ and $L(\om_2)^*\cong L(\om_{n-1})$. 
\end{proof}  

\begin{lem}\label{3.5}
Let $\Ls$ be a simple Lie algebra of type $A_{n}$ with $n\ge 4$ even. Then we have the following isomorphisms for the exterior
product of highest weight modules:
\begin{align*}
L(\om_2)\wedge L(\om_2) & \cong L(\om_1+\om_3),\\
L(\om_{n-1}) \wedge L(\om_{n-1}) & \cong L(\om_n+\om_{n-2}).
\end{align*}
\end{lem}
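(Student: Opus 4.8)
The plan is to compute the decomposition of $\Lambda^2(L(\om_2))$ for $\Ls$ of type $A_n$ directly, either via Weyl dimension formula bookkeeping together with the Littlewood--Richardson rule, or via plethysm on the underlying $GL_{n+1}$-representations. Concretely, $L(\om_2)$ is the second fundamental representation $\Lambda^2(W)$, where $W\cong L(\om_1)$ is the $(n+1)$-dimensional natural module. So I would first pass to the general linear group and use the classical plethysm formula for $\Lambda^2(\Lambda^2 W)$, which in terms of partitions reads $\Lambda^2(\Lambda^2 W)\cong \mathbb{S}_{(2,1,1)}(W)$ (the Schur functor for the partition $(2,1,1)$, equivalently $\Lambda^4 W \oplus \mathbb{S}_{(2,1,1)}W$ versus $\mathrm{Sym}^2(\Lambda^2 W)$; I will need to be careful which summand is the symmetric and which the exterior square). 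The Schur functor $\mathbb{S}_{(2,1,1)}(W)$ restricted to $\mathfrak{sl}_{n+1}$ has highest weight obtained from the partition $(2,1,1,0,\dots,0)$, which in terms of fundamental weights is $\om_1+\om_3$. This gives the claimed isomorphism $L(\om_2)\wedge L(\om_2)\cong L(\om_1+\om_3)$, at least up to checking irreducibility.

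The key subtlety, and the reason the hypothesis $n\ge 4$ even matters, is that the partition description only yields the correct irreducible $\mathfrak{sl}_{n+1}$-module when the partition $(2,1,1)$ ``fits'', i.e.\ has at most $n+1$ rows and, more importantly, when no accidental isomorphisms or degenerations occur for small $n$. For $n=2$ we would have $L(\om_2)\cong L(\om_1)^*$ which is $3$-dimensional and $\Lambda^2$ of it is again $3$-dimensional, a different phenomenon; for $n=3$ the module $L(\om_2)$ is self-dual $6$-dimensional and $\Lambda^2(L(\om_2))$ is $15$-dimensional, which happens to be the adjoint representation $L(\om_1+\om_3)$ of $A_3$ but the pattern is borderline. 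So I would verify the dimension count: $\dim L(\om_2)=\binom{n+1}{2}$, hence $\dim \Lambda^2 L(\om_2)=\binom{\binom{n+1}{2}}{2}$, and I would check this equals $\dim L(\om_1+\om_3)$ computed by the Weyl dimension formula for $A_n$. Matching these two polynomial expressions in $n$ confirms irreducibility (since if $\Lambda^2 L(\om_2)$ decomposed further, $L(\om_1+\om_3)$ being its Cartan component would be a proper summand and the dimensions could not agree).

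The second isomorphism $L(\om_{n-1})\wedge L(\om_{n-1})\cong L(\om_n+\om_{n-2})$ then follows immediately by duality, exactly as in the proof of Lemma~\ref{3.4}: applying the contragredient functor, which sends $\om_i\mapsto\om_{n+1-i}$ and commutes with $\Lambda^2$, turns the first isomorphism into the second. The main obstacle I anticipate is not the plethysm identity itself (which is standard, e.g.\ in Fulton--Harris or via the exercise cited as \cite{FUH}) but rather pinning down precisely which of $\mathrm{Sym}^2$ and $\Lambda^2$ of $L(\om_2)$ carries the highest weight $2\om_2$ and which carries $\om_1+\om_3$, and confirming that $\Lambda^2(L(\om_2))$ really is irreducible for all even $n\ge 4$ rather than acquiring an extra trivial or small summand; the evenness of $n$ should be used precisely here, to rule out the self-duality of $L(\om_2)$ (which occurs when $n$ is such that $\om_2=\om_{n-1}$, i.e.\ $n=3$) and the associated possibility of an invariant in $\Lambda^2$. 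I would therefore organize the write-up as: (i) recall $L(\om_2)=\Lambda^2 L(\om_1)$; (ii) cite the plethysm $\Lambda^2(\Lambda^2 V)\cong \mathbb{S}_{(2,1,1)}(V)$; (iii) translate the partition $(2,1,1)$ to the $\mathfrak{sl}_{n+1}$-weight $\om_1+\om_3$ and invoke irreducibility of Schur functors on an irreducible $GL$-module; (iv) dualize to get the second formula.
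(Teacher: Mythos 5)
Your proposal is correct and follows essentially the same route as the paper: the paper's proof simply cites the plethysm $\Lambda^2(V)\wedge\Lambda^2(V)\cong L(\om_1+\om_3)$ for $V=L(\om_1)$ from Exercise 15.32 of Fulton--Harris (which is exactly your Schur-functor identity $\Lambda^2(\Lambda^2 V)\cong\mathbb{S}_{(2,1,1)}(V)$) and then dualizes. Your extra dimension check and the discussion of the parity of $n$ are harmless but not needed; the hypothesis ``$n\ge 4$ even'' is inherited from the application in Lemma \ref{3.6} rather than being essential to the isomorphism itself.
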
  

\begin{proof}
For the natural $\Ls$-module $V=L(\om_1)$ we have $L(\om_2)\cong \Lambda^2(V)$ and
$\Lambda^2(V)\wedge \Lambda^2(V)\cong L(\om_1+\om_3)$, see again Exercise $15.32$ in \cite{FUH}. The second isomorphism follows
by duality.
\end{proof}  

\begin{lem}\label{3.6}
Let $\Ls$ be a simple Lie algebra. Then $\Ls$ has no prehomogeneous module of type $1$ or $2$.
\end{lem}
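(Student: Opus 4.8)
The plan is to use Theorem~\ref{3.2}, which reduces the claim to showing that a simple Lie algebra $\Ls$ has no prehomogeneous module of type $1$ or $2$. A module of type $1$ or $2$ is, by Definition~\ref{3.1}, a direct sum of two or three non-trivial irreducible $\Ls$-modules, and it must be prehomogeneous. So I first consult Vinberg's classification of \emph{reducible} prehomogeneous modules for simple $\Ls$, namely Theorem~\ref{2.10}. The entire list there consists of Lie algebras of type $A_\ell$ (either $A_\ell$ with $m$ copies of $L(\om_1)$ or $L(\om_\ell)$, or $A_{2\ell}$ with very specific pairs). In particular, no simple Lie algebra outside type $A$ has \emph{any} reducible prehomogeneous module, so for those the claim is immediate. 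Hence the work is entirely about type $A_n$.

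Next I go through the type-$A_n$ entries of Theorem~\ref{2.10} one at a time and check that none of them can be of type $1$ or $2$. The modules $mL(\om_1)$ and $mL(\om_\ell)$ (with $2\le m\le \ell$) are sums of $m$ copies of a single irreducible $V_0 = L(\om_1)$ (resp.\ $L(\om_\ell)$); if such a module were of type $1$ we would need two summands $A,B$ with $A\cong B\cong V_0$ and $V_0 \hookrightarrow \Lambda^2(V_0)$, and if of type $2$ we would need $V_0 \hookrightarrow V_0\otimes V_0$. By Lemma~\ref{3.4}, $L(\om_1)\otimes L(\om_1)\cong L(\om_2)\oplus L(2\om_1)$, which does \emph{not} contain $L(\om_1)$ as a summand (and $\Lambda^2 L(\om_1)\cong L(\om_2)$ likewise does not contain $L(\om_1)$), ruling out both types; the same argument with $L(\om_n)$ in place of $L(\om_1)$, using the dual statement in Lemma~\ref{3.4}, handles $mL(\om_n)$. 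For the $A_{2\ell}$ entries, the reducible prehomogeneous modules are $L(\om_1)\oplus L(\om_{2\ell-1})$, $L(\om_2)\oplus L(\om_{2\ell})$, $L(\om_2)\oplus L(\om_2)$, and $L(\om_{2\ell-1})\oplus L(\om_{2\ell-1})$, all with exactly two summands, so only type $1$ is possible. For $L(\om_2)\oplus L(\om_2)$ we would need $L(\om_2)\hookrightarrow \Lambda^2 L(\om_2) \cong L(\om_1+\om_3)$ by Lemma~\ref{3.5}, which is false since $L(\om_1+\om_3)\not\cong L(\om_2)$; dually for $L(\om_{2\ell-1})\oplus L(\om_{2\ell-1})$ using the second isomorphism of Lemma~\ref{3.5}. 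For the two ``mixed'' pairs $A\oplus B$ with $A\not\cong B$, being of type $1$ would require $B\hookrightarrow \Lambda^2(A)$ or $A\hookrightarrow \Lambda^2(B)$; one computes $\Lambda^2 L(\om_1)\cong L(\om_2)$ and $\Lambda^2 L(\om_{2\ell-1})$, $\Lambda^2 L(\om_2)\cong L(\om_1+\om_3)$, $\Lambda^2 L(\om_{2\ell})$, and checks by comparing highest weights that none of these contains the other summand of the respective pair. (For instance $L(\om_{2\ell-1})$ is not a constituent of $\Lambda^2 L(\om_1)\cong L(\om_2)$, and $L(\om_1)$ is not a constituent of $\Lambda^2 L(\om_{2\ell-1})$; similarly for the $L(\om_2)\oplus L(\om_{2\ell})$ pair.)

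The one subtlety to flag is that Definition~\ref{3.1} requires $B$ (resp.\ $C$) merely to \emph{embed} as a submodule of $\Lambda^2(A)$ (resp.\ $A\otimes B$), not to be all of it; so in each case I must verify the relevant irreducible does not occur as \emph{any} composition factor of the exterior or tensor square, not just that it is not the whole module. This is exactly what the explicit decompositions in Lemmas~\ref{3.4} and~\ref{3.5} provide, and the remaining exterior squares $\Lambda^2 L(\om_{2\ell-1})$ and $\Lambda^2 L(\om_{2\ell})$ can be decomposed by the same recipe (Exercise~$15.32$ in \cite{FUH}) or simply by a highest-weight count; none of them has the needed constituent.

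The main obstacle, such as it is, is purely bookkeeping: one must make sure every reducible prehomogeneous module on Vinberg's list (Theorem~\ref{2.10}) is accounted for, including the $A_{2\ell}$ cases with two isomorphic summands, and that the exterior/tensor square decompositions are computed correctly so that the non-occurrence of the required submodule is genuinely established rather than merely plausible. There is no deep difficulty: once the type-$A$ decompositions of $\Lambda^2$ and $\otimes^2$ of the fundamental representations $L(\om_1),L(\om_2)$ (and their duals) are in hand from Lemmas~\ref{3.4} and~\ref{3.5}, the verification is a finite, case-by-case check against the short list, and in every case the candidate submodule $B$ or $C$ fails to embed, so no prehomogeneous module of type $1$ or $2$ exists for any simple $\Ls$.
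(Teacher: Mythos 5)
Your proposal is correct and follows essentially the same route as the paper: reduce to type $A$ via Vinberg's classification of reducible prehomogeneous modules (Theorem~\ref{2.10}), then rule out each candidate pair or triple by checking, via the explicit decompositions in Lemmas~\ref{3.4} and~\ref{3.5}, that the required irreducible constituent does not occur in the relevant tensor or exterior square. Your explicit flagging of the distinction between ``embeds as a submodule'' and ``equals the whole module'' is a point the paper handles implicitly by giving full decompositions, but the substance of the argument is the same.
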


\begin{proof}
Suppose that $\Ls$ has a prehomogeneous module $V$ of type $1$ or $2$. Since $V$ is reducible, the classification
given in Theorem $\ref{2.10}$ shows that $\Ls$ is of type $A_{\ell}$ for some $\ell \ge 2$. \\[0.2cm]
{\em Case 1:} $V$ is of type $2$, so $V=A\oplus B\oplus C$ is the sum of three irreducible non-trivial $\Ls$-submodules
of $V$ such that $C$ can be embedded as an $\Ls$-submodule of the tensor product $A\otimes B$.
By Theorem $\ref{2.10}$, we obtain only two possibilities for the triple $(A,B,C)$, namely
\begin{align*}
(A,B,C) & = (L(\om_1),L(\om_1),L(\om_1)), \text{ or } \\
(A,B,C) & = (L(\om_{\ell}),L(\om_{\ell}),L(\om_{\ell})).
\end{align*}  
In each case $C$ is not an $\Ls$-submodule of the tensor product $A\otimes B$ by Lemma $\ref{3.4}$.
This is a contradiction to our assumption. \\[0.2cm]
{\em Case 2:}  $V$ is of type $1$, so $V=A\oplus B$ is the sum of two irreducible non-trivial $\Ls$-submodules of $V$ such
that $B$ is an $\Ls$-submodule of the exterior product $\Lambda^2(A)=A\wedge A$. Then the classification given in
Theorem $\ref{2.10}$ shows that either  $\Ls$ is of type $A_{\ell}$ for some $\ell \ge 2$ and
\begin{align*}
(A,B) & = (L(\om_1),L(\om_1)), \text{ or } \\
(A,B) & = (L(\om_{\ell}),L(\om_{\ell}))),
\end{align*}
or $\Ls$ is of type $A_n$ for some even $n=2\ell \ge 4$ and
\begin{align*}
(A,B) & = (L(\om_1),L(\om_{n-1})), (L(\om_2),L(\om_2)), (L(\om_2),L(\om_n)),  \text{ or } \\ 
(A,B) & = (L(\om_n),L(\om_2)), (L(\om_{n-1}),L(\om_2)), (L(\om_{n-1}),L(\om_{n-1})).
\end{align*}          
By assumption we have the inclusions $B\le A\wedge A\le A\otimes A$ of $\Ls$-modules. However, this is in all cases
above not true by Lemma $\ref{3.4}$ and $\ref{3.5}$. So we obtain a contradiction and we are done.
\end{proof}

These lemmas now yield our main result of this section.

\begin{thm}\label{3.7}
Let $\Lg$ be a disemisimple Lie algebra having a simple Levi subalgebra $\Ls$. Then $\rad(\Lg)$ is abelian.
\end{thm}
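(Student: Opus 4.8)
The plan is to combine Theorem~\ref{2.7} with Lemma~\ref{3.6} via Theorem~\ref{3.2}. Suppose for contradiction that $\Lg$ is a disemisimple Lie algebra with a simple Levi subalgebra $\Ls$ and that $\rad(\Lg)$ is \emph{not} abelian. By Lemma~\ref{2.3} we know $\rad(\Lg)=\nil(\Lg)$, so $\nil(\Lg)$ is a non-abelian nilradical, and $\Ls$ is a Levi subalgebra of $\Lg$. Then Theorem~\ref{3.2}, applied to the semisimple (indeed simple) Lie algebra $\Ls$, tells us that the existence of such a $\Lg$ is equivalent to the existence of a prehomogeneous $\Ls$-module of type $1$ or type $2$. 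But Lemma~\ref{3.6} asserts precisely that a simple Lie algebra has no prehomogeneous module of type $1$ or type $2$. This is the contradiction, so $\rad(\Lg)$ must be abelian.

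First I would invoke Lemma~\ref{2.3} to reduce to the nilradical, since Theorem~\ref{3.2} is phrased in terms of $\nil(\Lg)$ rather than $\rad(\Lg)$; this step is immediate because a disemisimple Lie algebra is perfect and hence its solvable radical coincides with its nilradical. Next I would apply the ``only if'' direction of Theorem~\ref{3.2}: if a disemisimple $\Lg$ with Levi subalgebra $\Ls$ has non-abelian $\nil(\Lg)$, then $\Ls$ admits a prehomogeneous module of type $1$ or $2$. Finally, Lemma~\ref{3.6} rules this out for simple $\Ls$. The whole argument is a short chain of citations to results already established earlier in the section.

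There is essentially no technical obstacle left at this stage, since all the substantive work has been front-loaded into Theorem~\ref{3.2} (the structural reduction to types $1$ and $2$) and Lemma~\ref{3.6} (the case analysis using Vinberg's classification in Theorems~\ref{2.9} and~\ref{2.10} together with the tensor/wedge decompositions in Lemmas~\ref{3.4} and~\ref{3.5}). The only point worth being careful about is that a ``simple'' Levi subalgebra is in particular ``semisimple'', so that both Theorem~\ref{3.2} and Lemma~\ref{3.6}, which are stated for semisimple respectively simple $\Ls$, genuinely apply; this is trivial but should be noted so the logical flow is transparent. Thus the proof is simply: assume $\rad(\Lg)$ is non-abelian, derive via Lemma~\ref{2.3}, Theorem~\ref{3.2}, and Lemma~\ref{3.6} a contradiction, and conclude.
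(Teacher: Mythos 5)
Your proposal is correct and follows essentially the same argument as the paper: assume $\rad(\Lg)$ is non-abelian, invoke Theorem~\ref{3.2} to produce a prehomogeneous $\Ls$-module of type $1$ or $2$, and contradict Lemma~\ref{3.6}. The extra remark that $\rad(\Lg)=\nil(\Lg)$ via Lemma~\ref{2.3} is a harmless clarification the paper leaves implicit.
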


\begin{proof}
Assume that  $\rad(\Lg)$ is non-abelian. Then $\Lg$ is disemisimple by Theorem $\ref{3.2}$ if and only if $\rad(\Lg)$
is a prehomogeneous $\Ls$-module of type $1$ or $2$. Since $\Ls$ is simple, it has no such prehomogeneous $\Ls$-module
according to Lemma $\ref{3.6}$. This is a contradiction. Hence  $\rad(\Lg)$ is abelian.  
\end{proof}  

\section{Simple factors of type A}

In this section we want to generalize Theorem $\ref{3.7}$ to disemisimple Lie algebras having no simple quotient
of type $A$.

\begin{defi}
A disemisimple Lie algebra $\Lg$ is called {\em $A$-free}, if it has no simple quotient of type $A$.
\end{defi}

Note that a disemisimple Lie algebra $\Lg$ is $A$-free if and only if its Levi subalgebras have no simple factor of type $A$.
We need the following lemma.

\begin{lem}\label{4.2}
Let $\Ls$ be a semisimple Lie algebra and $V$ be a nonzero irreducible prehomogeneous $\Ls$-module acting by
a Lie algebra homomorphism $\rho\colon \Ls\ra \End(V)$. If $\Ls$ has no simple factor of type $A$ then
$\Ls/\ker(\rho)$ is simple of type $C_{\ell}$ or $D_5$.
\end{lem}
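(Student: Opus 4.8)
The plan is to invoke the Sato–Kimura classification (Proposition \ref{2.13}) applied to the image representation. First I would observe that $\rho$ factors through $\bar\Ls := \Ls/\ker(\rho)$, which is again semisimple (a quotient of a semisimple Lie algebra by an ideal), and that $V$ is a faithful irreducible prehomogeneous $\bar\Ls$-module. Moreover $\bar\Ls$ has no simple factor of type $A$ either, since $\bar\Ls$ is a direct sum of a subset of the simple factors of $\Ls$. So the triple $(\bar\Ls, \rho, \dim V)$ is an irreducible faithful prehomogeneous triple, and by Proposition \ref{2.13} it is strongly equivalent to one of the six castling-reduced triples in the list. The key point is that strong equivalence and castling equivalence do not change the isomorphism type of $\bar\Ls$ up to the semisimple part acting faithfully — more precisely, strong equivalence only alters the representation by combining with a scalar/$GL_1$ factor, and castling replaces $(\Ls, \rho, m)$ by $(\Ls \oplus A_{n-m-1}, \rho' \otimes L(\om_1)^*, n-m)$ or its inverse, which when restricted to a semisimple group with \emph{no} type-$A$ factor forces the castling partner $A_{n-m-1}$ to contribute nothing, i.e. the triple is already castling-reduced on the $\bar\Ls$ side.

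Next I would go down the six entries of the table in Proposition \ref{2.13}. Entries $1$, $2$, $4$, $5$ all have a simple factor of type $A_m$ (or $A_n \oplus A_m$, or $A_1 \oplus A_{2m}$), so if $\bar\Ls$ is to have no type-$A$ factor, these are excluded — one must check that the non-$A$ part ($\Ls_1$ of dimension $s$ in row $1$, $C_n$ in row $5$) cannot by itself be the whole of $\bar\Ls$ while still giving the listed module, which it cannot because in each of these rows the representation genuinely involves the $L(\om_1)$ of the $A$-factor (the module is a nontrivial tensor product, hence not a module for the non-$A$ summand alone). Row $3$ is $\Ls = A_{2m}$ with $\rho = L(\om_2)$: this is type $A$, excluded. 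That leaves exactly row $6$, the triple $(D_5, L(\om_4), 16)$, together with — after undoing strong equivalence — its dual $(D_5, L(\om_5), 16)$. But this only covers the \emph{castling-reduced} representatives; to get $C_\ell$ I also need to account for the fact that Proposition \ref{2.13} lists castling-reduced triples, and a given $\bar\Ls$-module need not itself be castling-reduced. Here the cleaner route is to recall that Vinberg's Theorem \ref{2.9} already classifies \emph{all} irreducible prehomogeneous modules for \emph{simple} Lie algebras: if $\bar\Ls$ turns out to be simple, then directly from Theorem \ref{2.9} the non-$A$ possibilities are precisely $C_\ell$ with $V = L(\om_1)$ and $D_5$ with $V = L(\om_4)$ or $L(\om_5)$. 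So the remaining work is to show $\bar\Ls$ must be simple.

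To show $\bar\Ls$ is simple: suppose $\bar\Ls = \Ls' \oplus \Ls''$ with both summands nonzero and $V$ faithful irreducible. Then $V \cong V' \otimes V''$ for nontrivial irreducible $\Ls'$- and $\Ls''$-modules $V'$, $V''$ (Clebsch–Gordan for a direct sum). A tensor product $V' \otimes V''$ being prehomogeneous for $\Ls' \oplus \Ls''$ is exactly the situation covered by rows $1$, $2$, $4$, $5$ of Proposition \ref{2.13} up to castling/strong equivalence, and in every one of those rows one of the two tensor factors is the natural module $L(\om_1)$ of an $A$-type simple factor — contradicting the no-type-$A$ hypothesis. (One should phrase this carefully: castling acts on one tensor factor at a time and, applied to a semisimple group with no $A$-factor, cannot produce one, so the reduction stays within the no-$A$ world; but then no row of the table has $\geq 2$ semisimple summands all of non-$A$ type.) Hence $\bar\Ls$ is simple, and Theorem \ref{2.9} finishes it. The main obstacle I anticipate is bookkeeping around strong and castling equivalence — making rigorous the claim that these operations cannot introduce a type-$A$ factor, and that ruling out the castling-reduced representatives suffices. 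Everything else is direct table inspection and the standard fact that irreducibles of a direct sum are outer tensor products.
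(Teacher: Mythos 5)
Your plan is essentially the paper's proof: pass to the faithful quotient, invoke the Sato--Kimura classification, argue that a triple with no type-$A$ factor is already castling-reduced, and read off $C_{\ell}$ or $D_5$ from the table (or, as you suggest, from Vinberg's list once simplicity of the acting quotient is established). The step you rightly flag as the main obstacle is closed in the paper not by the claim that castling cannot introduce a type-$A$ factor --- taken literally that is false, since castling with a trivial $A_0$ partner does exactly that --- but by the observation that the non-$A$ tensor factor $V_{\CN(\cdot)}$ of an irreducible module is a castling invariant up to duality, so that $\dim(V_{\CA(\Lg)})\cdot \dim(W)=\dim(V)\le \dim(W)$ forces $\dim(V_{\CA(\Lg)})=1$ and $\dim(V)=\dim(W)$; hence the given triple is itself reduced, therefore strongly equivalent to a listed one, and the conclusion follows by inspection.
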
  

\begin{proof}
This follows from the results of Sato and Kimura in \cite{SAK}. They are formulated for semisimple algebraic groups, but can
be reformulated in terms of semisimple Lie algebras. See also the remark after Proposition $12$ on page $40$ in \cite{SAK}.
So for a semisimple Lie algebra $\Ls$ let us write $\Ls=\CA (\Ls)\oplus \CN (\Ls)$, where $\CA (\Ls)$ is the direct sum of
simple factors of type $A_{\ell}$, and $\CN (\Ls)$ the direct sum of simple factors, which are not of type $A_{\ell}$. For any nonzero
irreducible $\Ls$-module $V$ there exist an irreducible $\CA (\Ls)$-module $V_{\CA(\Ls)}$ and an irreducible
$\CN$-module $V_{\CN(\Ls)}$ such that $V\cong V_{\CA (\Ls)}\otimes V_{\CN (\Ls)}$.
Let $(\Lg,\rho,V)$ and $(\Lh,\sigma,W)$ be castling transforms of each other, see Definition $10$ on page $39$ in \cite{SAK},
where $\Lg$ and $\Lh$  are semisimple Lie algebras. Then $V_{\CN (\Lg)}$ is isomorphic to $W_{\CN (\Lh)}$ or to $W_{\CN (\Lh)}^*$,
so that $\dim (V_{\CN (\Lg)})=\dim (W_{\CN (\Lh)})$. \\[0.2cm]
Suppose that  $(\Lh,\sigma,W)$ is a prehomogeneous triple with $\CA(\Lh)=0$. Then it is castling equivalent to a reduced
triple  $(\Lg,\rho,W)$ of the classification list of Proposition $\ref{2.13}$. Since  $(\Lg,\rho,W)$ is reduced
we have the inequality
\begin{align*}
\dim (V_{\CA(\Lg)}) \cdot \dim (W_{\CN(\Lh)}) & = \dim (V_{\CA(\Lg)}) \cdot \dim (V_{\CN(\Lg)}) \\
                                            & = \dim (V)\le \dim (W) \\
                                            & =\dim (W_{\CN(\Lh)}),
\end{align*}  
and therefore $\dim (V_{\CA(\Lg)})=1$ and $\dim (V)=\dim (W)$. Hence the triple $(\Lh,\sigma,W)$ is itself reduced by definition.
Thus it is strongly equivalent to $(\Lg,\rho,W)$, because a reduced triple in any castling class is unique up to strong
equivalence by Proposition $12$ on page $39$ in \cite{SAK}. This implies that $\rho(\Lg)\cong \sigma(\Lh)$. Here
$\rho$ is faithful and $\sigma(\Lh)$ has no quotients of type $A$, because $\sigma(\Lh)$ is a quotient of $\Lh$, which has no
quotients of type  $A$ by assumption. So also $\Lg$ has no simple factor of type  $A$. The list of
Proposition $\ref{2.13}$ now implies that $\Lg$ itself is simple, and is in fact of type $C_{\ell}$ or $D_5$. Hence
\[
H/\ker(\sigma) \cong \sigma (\Lh)\cong \rho(\Lg)\cong \Lg
\]
is simple of type  $C_{\ell}$ or $D_5$.
\end{proof}  

Now we can prove our main result of this section.

\begin{thm}\label{4.3}
Let $\Lg$ be a disemisimple, $A$-free Lie algebra. Then its solvable radical is abelian. 
\end{thm}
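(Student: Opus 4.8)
The strategy mirrors the proof of Theorem \ref{3.7}: I want to show that a disemisimple $A$-free Lie algebra cannot have a non-abelian solvable radical, and the bridge to the classification is Theorem \ref{3.2}. So the plan is to argue by contradiction: suppose $\Lg$ is disemisimple, $A$-free, with $\rad(\Lg)$ non-abelian. Pass to a minimal-dimensional such example; by Theorem \ref{2.7} we may write $\Lg\cong\Ls\ltimes\nil(\Lg)$ with $\Ls$ a Levi subalgebra having no simple factor of type $A$, and $\nil(\Lg)$ a prehomogeneous $\Ls$-module. By Theorem \ref{3.2} (applied with this $\Ls$), the existence of such a $\Lg$ forces the existence of a prehomogeneous $\Ls$-module of type $1$ or type $2$. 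The whole task is then to rule out prehomogeneous $\Ls$-modules of type $1$ or $2$ when $\Ls$ has no simple factor of type $A$.

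Now I invoke Lemma \ref{4.2}, which is the real engine here. In a type-$1$ module $V=A\oplus B$ or a type-$2$ module $V=A\oplus B\oplus C$, each of $A,B,C$ is an irreducible non-trivial $\Ls$-submodule of the \emph{prehomogeneous} module $V$; hence each of them is itself a prehomogeneous $\Ls$-module (a submodule of a prehomogeneous module — well, this needs a tiny bit of care, since a submodule of a prehomogeneous module need not be prehomogeneous in general; rather I should argue that since $\Ls\cdot v = V$ for some $v$, and $V = A\oplus B(\oplus C)$, the projection of $v$ to each summand generates that summand, because the projections $\Ls$-equivariantly commute with the action, so $\Ls\cdot(\mathrm{pr}_A v) = \mathrm{pr}_A(\Ls\cdot v) = \mathrm{pr}_A V = A$, and similarly for $B$ and $C$). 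So $A$, $B$, and (in type $2$) $C$ are all nonzero irreducible prehomogeneous $\Ls$-modules. Applying Lemma \ref{4.2} to each, I conclude that $\Ls/\ker(\rho_A)$, $\Ls/\ker(\rho_B)$, $\Ls/\ker(\rho_C)$ are each simple of type $C_\ell$ or $D_5$.

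The remaining step is a direct check that the embedding condition defining types $1$ and $2$ cannot be satisfied. For type $1$: $B$ embeds in $\Lambda^2 A$, where $A$ is (after quotienting by $\ker\rho_A$, which is an ideal of $\Ls$ complementary to the simple factor acting faithfully) the module $L(\om_1)$ of $C_\ell$ (the $2\ell$-dimensional standard module) or $L(\om_4)$ or $L(\om_5)$ of $D_5$ (a $16$-dimensional half-spin module); these are the only irreducible prehomogeneous modules for these simple types, by Vinberg's Theorem \ref{2.9}. One computes $\Lambda^2 A$ in each case: for $C_\ell$, $\Lambda^2(L(\om_1))\cong L(\om_2)\oplus L(0)$ (the standard symplectic form splits off a trivial summand), and $L(\om_2)$ is not prehomogeneous for $C_\ell$ (not on Vinberg's list for $C_\ell$, which only has $L(\om_1)$), so no non-trivial irreducible prehomogeneous $B$ sits inside; for $D_5$, $\Lambda^2$ of a half-spin representation decomposes into pieces none of which is again a prehomogeneous module for $D_5$ (the prehomogeneous irreducibles for $D_5$ are only the two $16$-dimensional half-spins). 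For type $2$: $C$ embeds in $A\otimes B$; each of $A,B,C$ must be an irreducible prehomogeneous module for its faithfully-acting simple quotient of type $C_\ell$ or $D_5$, and one has to check that in the relevant tensor products the irreducible constituents are never themselves prehomogeneous modules of the allowed form — again this is a finite check using Vinberg's list. Alternatively, and more cleanly, one observes that by Lemma \ref{4.2} the only irreducible prehomogeneous modules in play are accounted for by the Sato–Kimura list of Proposition \ref{2.13} restricted to $A$-free groups, and in that list the reducible prehomogeneous modules with the required internal tensor/wedge structure simply do not occur; so the type-$1$ and type-$2$ modules are excluded, giving the contradiction. Hence $\rad(\Lg)$ is abelian.

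The main obstacle I anticipate is the type-$2$ case: unlike Lemma \ref{3.6}, where Vinberg's explicit list of \emph{reducible} prehomogeneous modules for simple $\Ls$ did all the work, here the Levi subalgebra can genuinely be a direct sum of several simple factors (e.g.\ several copies of $C_{\ell_i}$), so one cannot just read off a short list; one must genuinely use Lemma \ref{4.2} to pin down which simple quotient acts faithfully on each irreducible piece and then argue, piece by piece, that gluing them into a type-$1$ or type-$2$ configuration is incompatible with the tensor/wedge embedding together with prehomogeneity of all the pieces. Making that last compatibility argument airtight — rather than hand-wavy — is where the real care is needed, and it is cleanest to phrase it as: each of $A,B,C$ being prehomogeneous pins the faithful simple quotient to $C_\ell$ or $D_5$ with $A,B,C$ among the Vinberg modules $L(\om_1)$ (for $C_\ell$) or half-spins (for $D_5$), and then the required submodule relation $B\hookrightarrow\Lambda^2 A$ resp.\ $C\hookrightarrow A\otimes B$ fails by an explicit decomposition of these small modules.
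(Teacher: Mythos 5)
Your skeleton is the same as the paper's: argue by contradiction, use Theorem \ref{3.2} to produce a prehomogeneous $\Ls$-module of type $1$ or $2$, and use Lemma \ref{4.2} (together with the observation, which you rightly make explicit via the equivariant projections, that each irreducible summand of a prehomogeneous module is itself prehomogeneous) to control the action on each summand. The divergence --- and the gap --- is in how you finish. The paper's finishing move is not a computation of $\Lambda^2 A$ or $A\otimes B$ at all: by Lemma \ref{4.2} exactly one simple factor $\Ls_i$ acts non-trivially on $A$, hence the ideal $\bigoplus_{\ell\neq i}\Ls_\ell$ acts trivially on $\Lambda^2 A$ and therefore on $B\subseteq\Lambda^2 A$; so in type $1$ the whole module $V=A\oplus B$ is a prehomogeneous module of type $1$ for the single simple factor $\Ls_i$, and Lemma \ref{3.6}, already proved, gives the contradiction. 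In type $2$ the same reasoning puts $C\subseteq A\otimes B$ under the action of $\Ls_p\oplus\Ls_q$ only, and the case $p\neq q$ is disposed of by a one-line argument you never write down: if $p\neq q$, then $A\otimes B$ is an \emph{irreducible} $(\Ls_p\oplus\Ls_q)$-module (external tensor product of irreducibles), so the irreducible submodule $C$ equals $A\otimes B$, and then two distinct simple factors act non-trivially on the irreducible prehomogeneous module $C$, contradicting Lemma \ref{4.2} applied to $C$. Hence $p=q=r$ and Lemma \ref{3.6} finishes.

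That $p\neq q$ case is exactly what your write-up leaves open. Your proposed ``finite check using Vinberg's list'' only makes sense once all of $A,B,C$ are modules for one and the same simple factor, which is precisely what must be proved when $p\neq q$; and your ``cleaner alternative'' via Proposition \ref{2.13} does not work as stated, because the Sato--Kimura list classifies \emph{irreducible} prehomogeneous triples and says nothing directly about a reducible module $A\oplus B\oplus C$ with an internal embedding $C\hookrightarrow A\otimes B$. Your explicit decompositions for the single-factor case ($\Lambda^2(L(\om_1))\cong L(\om_2)\oplus L(0)$ for $C_\ell$, $\Lambda^2$ of a half-spin for $D_5$) are correct but unnecessary: once everything lives over one simple factor not of type $A$, Theorem \ref{2.10} already says that such a factor has no reducible prehomogeneous module at all. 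Supply the $p\neq q$ argument and replace the computations by the citation of Lemma \ref{3.6}, and your proof coincides with the paper's.
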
  

\begin{proof}
Let $\Ls=\Ls_1\oplus \cdots \oplus \Ls_k$ be a Levi subalgebra of $\Lg$ with simple factors $\Ls_i$ and assume that
$\rad(\Lg)$ is non-abelian. By Theorem $\ref{3.2}$ there exists a prehomogeneous $\Ls$-module $V$ of type $1$ or $2$. \\[0.2cm] 
{\em Case 1:} $V=A\oplus B$ is of type $1$. Then by Lemma $\ref{4.2}$ there exist unique $1\le i,j\le k$ such that $\Ls_i$ acts
only non-trivially on $A$ and $\Ls_j$ acts only non-trivially on $B$. By definition the ideal $\bigoplus_{\ell \neq i} \Ls_{\ell}$ of $\Ls$
acts trivially on $A$, and therefore trivially of $A\wedge A$. So $i=j$ and we see in particular that $V$ is a prehomogeneous
$\Ls_i$-module of type $1$. Since $\Ls_i$ is simple, we obtain a contradiction to Lemma $\ref{3.6}$. \\[0.2cm]
{\em Case 2:} $V=A\oplus B\oplus C$ is of type $2$. Then by Lemma $\ref{4.2}$ there exist unique $1\le p,q,r \le k$
such that $\Ls_p$ acts only non-trivially on $A$,  $\Ls_q$ acts only non-trivially on $B$, and  $\Ls_r$ acts only non-trivially on $C$.
Then the ideal $\bigoplus_{\ell \neq p,q} \Ls_{\ell}$ of $\Ls$ acts trivially on $A$ and $B$, and hence trivially on $A\otimes B$ and
its submodule $C$. So $V$ is a prehomogeneous  $(\Ls_p+ \Ls_q)$-module of type $2$. We claim that $p=q$ so that $\Ls_p+ \Ls_q=\Ls_p$.
Suppose we have $p\neq q$. Then $\Ls_p$ acts non-trivially and irreducibly on $A$, and trivially on $B$. Similarly,
$\Ls_q$ acts non-trivially and irreducibly on $B$, and trivially on $A$. So the ideals  $\Ls_p$ and $\Ls_q$ act non-trivially
and irreducibly on $A\otimes B$. Since $C$ is a non-trivial  irreducible submodule, we obtain $C=A\otimes B$. Hence
both $\Ls_p$ and $\Ls_q$ act non-trivially on $C$, which means $r=p$ and $r=q$, a contradiction. For $p=q$ however, 
$V$ is a  prehomogeneous $\Ls_p$-module of type $2$, where $\Ls_p$ is simple. This is a again a contradiction to Lemma $\ref{3.6}$.
\end{proof}  

Let $\Ls$  be a semisimple Lie algebra and $V$ be an $\Ls$-module. Denote by $\Lg_{\Ls,V}=\Ls \ltimes V$ be the semidirect product
given by
\[
[(s,v),(t,w)]=([s,t],s\cdot w-t\cdot v)
\]  
for $s,t\in \Ls$ and $v,w\in V$. The solvable radical of $\Lg_{\Ls,V}$ is  given by $V$ as a vector space and is abelian as a Lie algebra.
We obtain the following result.

\begin{prop}
Let $\Lg$ be a disimisimple, $A$-free Lie algebra. Then we have
\[
\Lg\cong \Lg_{\Ls_1,V_1}\oplus \cdots \oplus \Lg_{\Ls_k,V_k}
\]
for some $k\ge 1$, where $\Ls_i$ is simple and $V_i$ is a prehomogeneous $\Ls_i$-module.
\end{prop}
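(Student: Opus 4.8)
The plan is to derive the statement from Theorem~\ref{4.3} together with the corollary to Theorem~\ref{2.7}, and then to split the (by then abelian) solvable radical as an $\Ls$-module using Lemma~\ref{4.2}. Throughout I would assume $\Lg\neq 0$, fix a Levi subalgebra $\Ls=\Ls_1\oplus\cdots\oplus\Ls_k$ of $\Lg$ with each $\Ls_i$ simple (here $k\ge 1$, since $\Ls=0$ would force the perfect Lie algebra $\Lg$ to equal its solvable radical, hence to vanish), and note that the $A$-freeness of $\Lg$ means that no $\Ls_i$ is of type $A$.

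First I would invoke Theorem~\ref{4.3} to get that $V:=\rad(\Lg)$ is abelian, so the corollary to Theorem~\ref{2.7} gives an isomorphism $\Lg\cong\Ls\ltimes V=\Lg_{\Ls,V}$ with $V$ a prehomogeneous $\Ls$-module. Next I would decompose $V=W_1\oplus\cdots\oplus W_m$ into irreducible $\Ls$-submodules (Weyl's theorem) and record the elementary but key fact that every $\Ls$-submodule of a prehomogeneous module is prehomogeneous: applying the $\Ls$-equivariant projection onto a submodule $U$ to an equation $\Ls\cdot v=V$ yields $\Ls\cdot\pi_U(v)=U$. In particular each nonzero $W_j$ is an irreducible prehomogeneous $\Ls$-module, so Lemma~\ref{4.2} applies — this is exactly where the $A$-free hypothesis is used — and shows that $\Ls/\ker(\rho_j)$ is simple, where $\rho_j$ denotes the representation on $W_j$. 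Since a maximal ideal of the semisimple Lie algebra $\Ls$ is precisely the sum of all but one of the $\Ls_i$, this forces $\ker(\rho_j)=\bigoplus_{i\neq\sigma(j)}\Ls_i$ for a uniquely determined index $\sigma(j)$; equivalently, $\Ls_\ell$ annihilates $W_j$ for every $\ell\neq\sigma(j)$.

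Finally I would set $V_i:=\bigoplus_{j:\,\sigma(j)=i}W_j$ for $i=1,\dots,k$ (an empty sum being $0$, which is still prehomogeneous). Then $V=\bigoplus_{i=1}^k V_i$ as $\Ls$-modules, every $\Ls_\ell$ with $\ell\neq i$ acts trivially on $V_i$ so that $V_i$ may be viewed as an $\Ls_i$-module, and the projection argument above again shows $V_i$ is prehomogeneous as an $\Ls_i$-module. To conclude, I would verify that the tautological linear bijection
\[
\Lg_{\Ls,V}\longrightarrow\Lg_{\Ls_1,V_1}\oplus\cdots\oplus\Lg_{\Ls_k,V_k},\qquad\Bigl(\textstyle\sum_i s_i,\ \sum_i v_i\Bigr)\longmapsto\bigl((s_1,v_1),\dots,(s_k,v_k)\bigr),
\]
is a Lie algebra isomorphism: from the bracket formula for $\Lg_{\Ls,V}$ one has $[s,t]=\sum_i[s_i,t_i]$ because $[\Ls_i,\Ls_j]=0$ for $i\neq j$, and $s\cdot w-t\cdot v=\sum_i(s_i\cdot w_i-t_i\cdot v_i)$ because $\Ls_i$ annihilates $V_j$ for $i\neq j$, which is precisely the bracket of $\bigoplus_i\Lg_{\Ls_i,V_i}$. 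I do not expect a genuine obstacle: the only points demanding care are the descent of prehomogeneity to submodules and direct summands (handled by the equivariant projection) and the applicability of Lemma~\ref{4.2}, which packages the real input — the Sato--Kimura classification — and which is what forces each irreducible constituent of $V$ to live over a single non-$A$ simple factor of $\Ls$.
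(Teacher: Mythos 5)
Your proof is correct, and it rests on the same key input as the paper's own proof, namely Lemma~\ref{4.2} forcing each irreducible constituent of the radical to be supported on a single non-$A$ simple factor of $\Ls$. The notable difference is in how the constituents are matched with the simple factors. The paper additionally invokes Vinberg's classification (Theorems~\ref{2.9} and \ref{2.10}) to argue that each simple factor $\Ls_j$ acts non-trivially on \emph{at most one} irreducible constituent, and then pairs factors with constituents bijectively after reordering and padding with zero modules; in particular each $V_i$ in the paper's decomposition is irreducible or zero. You sidestep this second input entirely by defining $V_i$ as the sum of \emph{all} constituents on which $\Ls_i$ acts non-trivially, which is legitimate because the statement only asks for $V_i$ prehomogeneous, not irreducible, and prehomogeneity passes to $\Ls$-submodules via the equivariant projection. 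This makes your argument slightly more self-contained (one classification input instead of two) at the cost of the extra structural information that the $V_i$ are in fact irreducible. You also make explicit two points the paper leaves implicit: that Theorem~\ref{4.3} and the corollary to Theorem~\ref{2.7} are needed to reduce to $\Lg\cong\Ls\ltimes\rad(\Lg)$ with abelian radical, and that the individual irreducible summands of a prehomogeneous module are themselves prehomogeneous, which is required before Lemma~\ref{4.2} can be applied to them. Both additions are welcome; there is no gap.
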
  

\begin{proof}
Let $\Ls=\Ls_1\oplus \cdots \oplus \Ls_k$ be the decomposition into simple ideals of a Levi subalgebra $\Ls$, and
$\rad(\Lg)=V_1\oplus  \cdots \oplus V_{\ell}$ be the decomposition of $\rad(\Lg)$ into irreducible $\Ls$-modules $V_1,\ldots ,V_{\ell}$.
By Lemma $\ref{4.2}$ there exists for each $V_i$ a unique $\sigma(i)$ such that $\Ls_{\sigma(i)}$ acts non-trivially on $\Ls_i$.
Since $\Lg$ is $A$-free, the classification of Vinberg, Theorem $\ref{2.9}, \ref{2.10}$, shows that for each $\Ls_j$ there is at most
one $V_{\pi(j)}$ such that $\Ls_j$ acts non-trivially on $V_{\pi(j)}$. So after reordering the simple ideals $\Ls_1,\ldots ,\Ls_k$ we may
assume that $\Ls_1,\ldots ,\Ls_{\ell}$ act non-trivially on $V_1,\ldots , V_{\ell}$ respectively, and that the other simple ideals
$\Ls_j$ for $\ell+1\le j\le k$ act trivially on $\rad(\Lg)$. By setting $V_j=0$ for such $j$, we obtain the claimed isomorphism of Lie algebras.
\end{proof}

\section*{Acknowledgments}
The authors are supported by the Austrian Science Foun\-da\-tion FWF, grant I 3248 and grant P 33811.
The second author was partly supported by the FWF grant P 30842.


\begin{thebibliography}{99}


\bibitem{BAK} Y. Bahturin, O. Kegel: {\em Sums of simple subalgebras}.
J.\ Math.\ Sci.\ (New York) \textbf{93} (1999), no. 6, 830--835.
  
\bibitem{BTT} Y. Bahturin, M. Tvalavadze, T. Tvalavadze: {\em Sums of simple and nilpotent Lie subalgebras}.
Comm.\ Algebra \textbf{30} (2002), no. 9, 4455--4471.
  
\bibitem{BU41} D. Burde, K. Dekimpe and K. Vercammen: {\em Affine actions on Lie groups and post-Lie algebra 
structures}.
Linear Algebra Appl.\  \textbf{437} (2012), no. 5, 1250--1263.

\bibitem{BU44} D. Burde, K. Dekimpe: {\em Post-Lie algebra structures and generalized
derivations of semisimple Lie algebras}.
Mosc.\ Math.\ J.\ \textbf{13} (2013), Issue 1, 1--18.

\bibitem{BU51} D. Burde, K. Dekimpe: {\em Post-Lie algebra structures on pairs of Lie algebras}.
Journal of Algebra \textbf{464} (2016), 226--245.

\bibitem{BU53} D. Burde, W. Globke: {\em \'Etale representations for reductive algebraic groups
with one-dimensional center}.
Journal of Algebra, Vol. \textbf{487} (2017), 200--216.

\bibitem{BU59} D. Burde, V. Gubarev: {\em Rota--Baxter operators and post-Lie algebra structures on semisimple Lie algebras}.
Communications in Algebra, Vol. \textbf{47}, Issue 5, 2280--2296 (2019).

\bibitem{BU64} D. Burde, V. Gubarev: {\em Decompositions of algebras and post-associative
algebra structures}.
International Journal of Algebra and Computation, Vol. 30, Issue 3, 451--466 (2020).

\bibitem{CST} J. Cossey, S. Stonehewer: {\em On the derived length of finite dinilpotent groups}.
Bull.\ London Math.\ Soc.\ \textbf{30} (1998), 247--250.

\bibitem{FUH} W. Fulton, J. Harris: {\em Representation theory. A first course}.
Graduate Texts in Mathematics, \textbf{129}. Readings in Mathematics. Springer-Verlag, New York, 1991. xvi+551 pp. 

\bibitem{GOT} M. Goto: {\it Note on a characterization of solvable Lie algebras}.
J. Sci. Hiroshima Univ. Ser. A-I Math. {\bf 26} (1962), No. 1,  1--2.

\bibitem{HEL} J. Helmstetter: {\em Radical d'une alg\`ebre sym\'etrique a gauche}.
Ann.\ Inst.\ Fourier (Grenoble) \textbf{29} (1979), 17--35.

\bibitem{KE1} O. H. Kegel: {\em Produkte nilpotenter Gruppen}.
Arch.\ Math.\ \textbf{12} (1961), 90--93.

\bibitem{KOS} J. L. Koszul: {\em Variante d’un th\'{e}or\'{e}me de H. Ozeki}.
Osaka J.\ Math.\  \textbf{15} (1978), 547--551.

\bibitem{ON62} A. L. Onishchik: {\em Inclusion relations between transitive compact transformation groups}.
Trudy Moskov.\ Mat.\ Obs.\ \textbf{11} (1962) 199--242, English translation in Amer.\ Math.\ Soc.\ Transl.\ 
(2) \textbf{50}(1966), 5--58.

\bibitem{ON69} A. L. Onishchik: {\em Decompositions of reductive Lie groups}.
Mat.\ Sbornik \textbf{80} (122) (1969), no. 4, 515--554.

\bibitem{SAK} M. Sato, T. Kimura, {\em A Classification of Irreducible Prehomogeneous Vector Spaces and their Relative Invariants},
Nagoya Math.\ J.\ \textbf{65} (1977), 1--155.

\bibitem{VIN} E. A. Vinberg: {\em Invariant linear connections a homogeneous space (Russian)}.
Trudy Moskov.\ Mat.\ Obsc.\ \textbf{9} (1960), 191--210.

\bibitem{WAL} G. L. Walls: {\em Non-simple groups which are the product of simple groups}.
Arch.\ Math.\ (Basel) \textbf{53} (1989), no. 3, 209--216.

\end{thebibliography}
\end{document}